\theoremstyle{plain}
\newtheorem{theorem}{Theorem}
\newtheorem{proposition}[theorem]{Proposition}
\newtheorem{lemma}[theorem]{Lemma}
\theoremstyle{definition}
\newtheorem{remark}[subsection]{Remark}
\newtheorem{nothing*}[subsection]{}
\newcommand{\rien}[1]{}
\newcommand{\C}{\ensuremath{\mathbb{C}}}
\newcommand{\B}{\ensuremath{\mathbb{B}}}
\newcommand{\N}{\ensuremath{\mathbb{N}}}
\def\e{\epsilon}
\renewcommand{\epsilon}{\varepsilon}
\renewcommand{\phi}{\varphi}
\renewcommand{\emptyset}{\varnothing}
\title{A transcendental H\'enon map with an oscillating wandering Short $\C^2$}
\author{Leandro Arosio$^{\dag}$, Luka Boc Thaler$^{\dag}$ and Han Peters}
\begin{document}

\thanks{$^{\dag}$  Supported by the SIR grant ``NEWHOLITE - New methods in holomorphic iteration'' no. RBSI14CFME}
\address{ L. Arosio: Dipartimento Di Matematica\\
Universit\`{a} di Roma \textquotedblleft Tor Vergata\textquotedblright\  \\
 Italy} \email{arosio@mat.uniroma2.it}
\address{L. Boc Thaler: Dipartimento Di Matematica\\
Universit\`{a} di Roma \textquotedblleft Tor Vergata\textquotedblright\  \\
 Italy} \email{Luka.Boc@fmf.uni-lj.si}
\address{ H. Peters: Korteweg de Vries Institute for Mathematics\\
University of Amsterdam\\
the Netherlands} \email{hanpeters77@gmail.com}

\begin{abstract}
Short $\mathbb C^2$'s were constructed in \cite{F} as attracting basins of a sequence of holomorphic automorphisms whose rate of attraction increases superexponentially. The goal of this paper is to show that such domains also arise naturally as autonomous attracting basins: we construct a transcendental H\'enon map with an oscillating wandering Fatou component that is a Short $\C^2$. The superexponential rate of attraction is not obtained at single iterations, but along consecutive oscillations.
\end{abstract}
\renewcommand{\baselinestretch}{1.07}
%%%%%%  TOPMATTER:   %%%%%%%%%%%%%%%%%%%%%%%%%

%\subjclass[2000]{32E20, 32E30, 32H02}
%\date{April 15, 2013}
%\keywords{}

\vfuzz=2pt
%%%%%%%%%%%%%%%%%%%%%%%%%%%%%%%%%%%%%%%%%%%%%%%%%%%%%%%%%%%%%%%%%%%
%%%%%%%%%%%%%%%%%%%%%%%%%%%%%%%%%%%%%%%%%%%%%%%%%%%%%%%%%%%%%%%%%%%
%%%%%%%%%%%%%%%%%%%%%%%%%%%%%%%%%%%%%%%%%%%%%%%%%%%%%%%%%%%%%%%%%%%

\vskip 1cm

\maketitle
\tableofcontents
\section{Introduction}

In \cite{F} Forn{\ae}ss proved the existence of so-called Short $\mathbb C^k$'s. Such domains are increasing union of balls whose Kobayashi metric vanishes identically, but which allow for the existence of non-constant bounded plurisubharmonic functions, and are thus not equivalent to $\mathbb C^k$. In the construction by Forn{\ae}ss the Short $\mathbb C^k$ arises as an attracting basin for a non-autonomous dynamical systems, given by compositions of a sequence of holomorphic automorphisms, see Proposition \ref{lem:short} below. %In the same paper Forn{\ae}ss proves that for any H\'enon map, the sublevel sets of its Green's function have a structure of a Short $\mathbb C^2$.
We prove here that a Short $\mathbb C^k$ can also arise naturally as a Fatou component of a single holomorphic automorphism.

It is clear that we cannot work with the basin of an attracting fixed point: such domains are always biholomorphic to $\mathbb C^k$ \cite{RR, Sternberg}. We note that the attracting basins of neutral or semi-neutral fixed points are  often known to be biholomorphic to $\mathbb C^k$, see for example \cite{Ueda} for the semi-parabolic case, and \cite{Weickert, Hakim} for maps tangent to the identity. In contrast, in the recent paper \cite{BRS} examples of attracting basins of neutral fixed points were constructed that are not biholomorphic to $\mathbb C^k$ but to $\mathbb C \times \mathbb (\C^\star)^{k-1}$. Whether the basin of attraction of a neutral fixed point can be a Short $\mathbb C^k$ is not known, but we consider it unlikely.

Instead we will prove that there exist holomorphic automorphisms with \emph{wandering} Fatou components equivalent to a Short $\mathbb C^2$. It immediately follows that such maps give rise to infinitely many disjoint Short $\mathbb C^2$'s, giving an alternative construction to an observation from \cite{BPV}. Recall that a Fatou component $\mathcal{F}_0$ of a map $F$ is wandering if  $F^k(\mathcal{F}_0)\cap F^j(\mathcal{F}_0)=\emptyset$ for all $k\neq j$. A wandering Fatou component $\mathcal{F}_0$ is oscillating if some subsequence $F^{k_n}$ has bounded orbits in $\mathcal{F}_0$, while a different subsequence has orbits converging to infinity.% orbit starting in $\Omega$ is unbounded and some other has a bounded subsequence.

The first construction of a holomorphic automorphism of $\C^2$ with a wandering domain is due to Forn\ae ss-Sibony \cite{FS}, although the  complex structure of the domain has not been studied in the literature.
Recently Arosio-Benini-Forn\ae ss-Peters \cite{ABFP} constructed a transcendental H\'enon map, i.e. a holomorphic automorphism of $\C^2$ of the form $F(z,w)=(f(z)+aw,az)$ with $f\colon \C\to \C$ a transcendental  function, which admits an oscillating wandering domain biholomorphic to $\C^2$. In this paper we will modify the construction from \cite{ABFP} to obtain the following:

\begin{theorem}\label{thm:main2}
There exist a transcendental H\'enon map with a oscillating wandering domain biholomorphic to a Short $\C^2$.
\end{theorem}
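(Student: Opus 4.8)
The plan is to start from the transcendental Hénon map of Arosio--Benini--Fornæss--Peters \cite{ABFP}, whose oscillating wandering component is biholomorphic to $\C^2$, and to perturb the construction so that, rather than a single exponential contraction being accumulated, the attraction is realized along consecutive oscillations with a rate that increases superexponentially --- exactly the hypothesis needed to invoke Proposition \ref{lem:short}. Concretely, the Hénon map $F(z,w)=(f(z)+aw,az)$ oscillates between a ``large'' region, where an escaping-orbit mechanism pushes points far out, and a ``return'' region near a fixed attracting behaviour, where the dynamics is close to a contraction. I would track a point of the wandering component through one full oscillation cycle, from the $n$-th time it is near the attracting region to the $(n+1)$-st time, and estimate the composite map $\Phi_n$ of these cycles: the goal is to choose the transcendental function $f$ (as in \cite{ABFP}, a carefully tuned entire function interpolating prescribed local models at a sparse sequence of points, via a Runge/approximation argument) so that $\Phi_n$ is, up to small errors, a contraction by a factor $\lambda_n$ with $\lambda_{n+1}\ll \lambda_n^2$, i.e. $-\log\lambda_{n+1}/(-\log\lambda_n)\to\infty$.

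The key steps, in order: (1) recall the precise structure of the \cite{ABFP} example --- the sequence of ``time stamps'' along which the orbit of the wandering component leaves and returns, and the local normal forms of $F$ near the return points; (2) modify the data defining $f$ so that near the $n$-th return point the local model has a contraction ratio that decays superexponentially in $n$, while keeping all the global estimates (escape to large scale, Runge approximation error bounds, distortion control) from \cite{ABFP} intact --- this is where one must check that the perturbation does not destroy either the existence of the entire function $f$ or the oscillating/wandering behaviour; (3) verify that the resulting Fatou component is still a single wandering component, still oscillating (one needs bounded orbits along one subsequence and escape to infinity along another, which is automatic from the oscillation mechanism), and that it is exhausted by an increasing union of balls; (4) identify this exhausting union with the non-autonomous attracting basin of Proposition \ref{lem:short}: writing $\psi_n$ for the change of coordinates centering at the $n$-th return point and rescaling, the transition maps $\psi_{n+1}\circ \Phi_n\circ \psi_n^{-1}$ should be close to linear contractions by $\lambda_n$ with $\sum -\log\lambda_n=\infty$ but with the superexponential growth condition, so that the basin is a Short $\C^2$ and in particular not biholomorphic to $\C^2$; (5) conclude that $\Phi_n$ (equivalently a suitable iterate of $F$ restricted to the component and conjugated) falls under the hypotheses of Fornæss' construction, so the component is a Short $\C^2$.

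The main obstacle I expect is step (2) together with the Short-type estimate in step (4): in \cite{ABFP} the delicate point is already the simultaneous control of the Runge approximation error and the distortion of $F$ over many iterations, carried out so that the limit component is precisely $\C^2$; here one must instead \emph{engineer} a prescribed superexponential decay of the contraction ratios across oscillations without letting the accumulated approximation errors swamp that decay. In particular the contraction per oscillation is not a single derivative but the product of a ``stretch'' phase and a ``squeeze'' phase, so one must make the squeeze phase win by a superexponential margin while the stretch phase (needed for oscillation) is only mildly expanding --- balancing these two competing requirements, and showing the balance is compatible with the entire-function interpolation, is the heart of the argument. A secondary technical point is to show rigorously that the increasing union of balls one gets is genuinely a Short $\C^2$ and not accidentally all of $\C^2$: this requires checking the hypotheses of Proposition \ref{lem:short} (the superexponential rate) are strictly satisfied along the chosen subsequence, i.e. that the rate is not merely exponential, and that the resulting component carries a non-constant bounded plurisubharmonic function (equivalently, is not $\C^2$), which again traces back to the quantitative gap between $\lambda_{n+1}$ and $\lambda_n^2$.
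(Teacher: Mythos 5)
Your high-level strategy --- build the H\'enon map by Runge approximation so that the rescaled transition maps between consecutive oscillations mimic the Forn{\ae}ss sequence, and invoke Proposition~\ref{lem:short} --- is the one the paper follows. But there are two genuine gaps.

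First, in step (4) you ask that the rescaled transition maps $\psi_{n+1}\circ\Phi_n\circ\psi_n^{-1}$ be close to \emph{linear} contractions by $\lambda_n$ with a superexponential rate, and claim this yields a Short $\C^2$. It does not. The basin of a sequence of linear contractions of the ball is biholomorphic to $\C^2$ regardless of the rate; superexponential decay of the contraction factors alone is not what makes Forn{\ae}ss' domain a Short $\C^2$. The Short $\C^2$ of Proposition~\ref{lem:short} is the basin of the specific \emph{nonlinear} maps $H_n(z,w)=\bigl((z/2)^{d_n}+2^{-d_n\cdots d_1}w,\,2^{-d_n\cdots d_1}z\bigr)$, and the degree-$d_n$ polynomial term is essential: it is what gives the domains $H_{n,0}^{-1}(\B)$ the shape that supports a non-constant bounded plurisubharmonic exhaustion and forces $\Omega_H\neq\C^2$. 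Concretely, the superexponential ingredient that the H\'enon map must realize is that $F^{n_{k+1}-n_k}$, conjugated by the recentering-and-rescaling affine maps, is close to $H_{k+1}$ --- a power map composed with a superexponentially small linear part --- and not merely a fast linear contraction. Moreover ``close'' by itself is not enough: one needs a quantitative comparison statement (Lemma~\ref{lem:compare} in the paper) showing that if the rescaled transitions $G_n$ satisfy $\|G_n-H_n\|\le\epsilon(H_1,\dots,H_n)$ for a sequence of thresholds depending only on the model, then $\Omega_G\cong\Omega_H$. Your proposal does not address either the nonlinear local model or this comparison step.

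Second, even granting that the calibrated basin $\Omega_F=\bigcup_k F^{-n_k}(\B(P_{n_k},\beta_{n_k}))$ is a Short $\C^2$, you have at that point only shown that the Fatou component $\mathcal F_0$ \emph{contains} a Short $\C^2$; you have not shown $\mathcal F_0=\Omega_F$. Your step (3), ``verify that the component is exhausted by an increasing union of balls,'' assumes precisely what has to be proved --- \emph{a priori} $\mathcal F_0$ could be strictly larger than $\Omega_F$. Closing this gap requires a separate idea, the plurisubharmonic method of \cite{ABFP}: produce a plurisubharmonic function on $\mathcal F_0$, nonpositive there, strictly negative at a point of $\Omega_F$, and identically zero on $\mathcal F_0\setminus\Omega_F$, then appeal to the maximum principle to conclude $\mathcal F_0\setminus\Omega_F=\emptyset$. (In the paper this function is a limit of normalized $\log$-distances $\psi_k$ built from $F^{n_k}$, and its convergence hinges again on the quantitative closeness of the $G_k$ to the $H_k$ together with the condition $\log\beta_{n_k}/(d_k\cdots d_1)\to 0$.) Without this step the theorem is not proved.
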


\begin{remark}
Our proof in fact guarantees that the constructed wandering Fatou component is biholomorphic  to one of the Short $\mathbb C^2$'s constructed by Forn{\ae}ss. Little has been written about the possible equivalence classes of Short $\mathbb C^k$'s, although it is clear from the results in \cite{F} that there are at least countably many. Recall from the recent paper of Forstneri\v{c} and the second author \cite{BTF} that there are in fact uncountably many equivalent classes of complex manifolds with vanishing Kobayashi pseudometric that can be written as an increasing union of balls; a more general notion sometimes taken as the definition of a Short $\mathbb C^k$.
\end{remark}

%Recall that a  Short $\C^2$ is a complex manifold exhausted by biholomorphic images of a ball, which has everywhere vanishing Kobayashi distance and carries a bounded non-constant plurisubharmonic function (and thus is not biholomorphic to $\C^2$).
We note that a transcendental H\'enon map $F$ has constant Jacobian determinant, thus the rate of contraction cannot increase, even as orbits escape to infinity. Using Runge approximation we recursively construct an oscillating orbit $P_n$ in such a way that the number of iterates of the consecutive oscillations increases superexponentially fast. This guarantees the rate of contraction to increase superexponentially fast when one considers the iterations from a point $P_{n_j}$ in one oscillation to a point $P_{n_{j+1}}$ in the next, allowing the construction of maps that are, after a suitable rescaling, arbitrarily close to the non-autonomous sequence of maps constructed by Forn{\ae}ss in \cite{F}.

We construct a sequence of corresponding balls $B(P_{n_j}, \beta_{n_j})$, with $\beta_{n_j} \rightarrow 0$, each mapped by $F^{n_{j+1} - n_j}$ into the next ball $B(P_{n_{j+1}}, \beta_{n_{j+1}})$. The construction guarantees that the \emph{calibrated} basin is a Short $\mathbb C^2$; see \cite{PW} and the appendix of this paper for a discussion of calibrated basins.

The calibrated basin is necessarily contained in the corresponding Fatou component $\mathcal{F}_0$. To prove that the calibrated basin is in fact equal to the Fatou component, we use a method introduced in \cite{ABFP}: we construct a non-positive plurisubharmonic function on $\mathcal{F}_0$ that is strictly negative at some point in the calibrated basin, and constantly equal to $0$ on its complement. It follows that the Fatou component cannot be larger than the calibrated basin. In the appendix we show how the plurisubharmonic method can be applied to arbitrary calibrated basins, clarifying the main result from \cite{PW}.

Let us point out that while working with transcendental H\'enon maps is more restrictive than working with arbitrary holomorphic automorphisms, it also has advantages. We work with a sequence of Runge approximations, and  obtain a sequence of automorphisms converging locally uniformly to a limit map $F$. When working with holomorphic automorphisms it is a priori not clear that the limit map is also an automorphism: it may not be surjective. However, for a convergent sequence of transcendental H\'enon maps, surjectivity of the limit map is automatic.

\section{Preparatory results}
If $(H_{n})_{n\geq 1}$ is a sequence of automorphisms of $\C^2$, then for all $0\leq n\leq m$ we denote
$$H_{m,n}:=H_{m}\circ \dots \circ H_{n+1}.$$
Notice that with these notations we have for all $n\geq 0$, $$H_{n+1,n}=H_{n+1}, \quad H_{n,n}={\sf id}.$$
If for all $n\geq 1$ we have $H_{n}(\B)\subset \B $, where $\B$ denotes the unit ball, then we define the {\sl basin} of the sequence $(H_n)$ as the domain
$$
\Omega_H:=\bigcup_{n\geq 0} H_{n,0}^{-1}(\B).
$$

\begin{lemma}\label{lem:compare}
To every finite family  $(F_1, \dots , F_n)$ of holomorphic automorphisms of $\C^2$  satisfying  $F_{j}(\B)\subset\subset \B $ for all $0\leq j\leq n$
we can associate $\e(F_1, \dots , F_n)>0$ such that the following holds:

Given any two sequences  $(H_{n})_{n\geq 1}$ and $(G_{n})_{n\geq 1}$ of holomorphic automorphisms of $\C^2$ satisfying $H_{n}(\B)\subset\subset \B $ and $G_{n}(\B)\subset \B $ for all $n\geq1$, and moreover satisfying
$$
\|H_{n}-G_{n}\|_{\overline\B}\leq \e(H_1, \dots , H_n),\quad \forall n\geq 1,
$$
the basins $\Omega_{G}$ and $\Omega_H$ are biholomorphically equivalent.
\end{lemma}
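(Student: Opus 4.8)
The plan is to show that the biholomorphism type of the basin $\Omega_H$ depends only on finitely-many-at-a-time perturbations of the maps, by exhibiting $\Omega_G$ as an increasing union of domains that are each biholomorphic, via explicit maps, to an exhausting family of $\Omega_H$. The key point is that the basin is built from the nested preimages $H_{n,0}^{-1}(\B)$, and a perturbation of $H_{n}$ only affects the behaviour ``at level $n$''. So I would define $\e(F_1,\dots,F_n)$ recursively: having controlled the first $n-1$ maps so that the partial compositions behave well, choose $\e(F_1,\dots,F_n)$ small enough that (i) $G_{n}(\B)\subset\subset\B$ still holds and, more importantly, (ii) the open set $G_{n,0}^{-1}(\B)$ stays sandwiched between $H_{n-1,0}^{-1}(\B)$ (or a slightly shrunk version) and $H_{n+1,0}^{-1}(\B)$ once we also know the tail is close.

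\medskip

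First I would set up the comparison at a single level. Fix $n$ and consider the two partial compositions $H_{n,0}$ and $G_{n,0}$. Writing $G_{n,0}=G_{n}\circ G_{n-1,0}$ and $H_{n,0}=H_{n}\circ H_{n-1,0}$ and iterating, one gets an estimate of the form $\|H_{n,0}-G_{n,0}\|_{\overline\B}\le C_n\sum_{j=1}^{n}\|H_j-G_j\|_{\overline\B}$, where $C_n$ depends only on Lipschitz constants of $H_1,\dots,H_n$ on $\overline\B$ (all finite since these are holomorphic and $\overline\B$ is compact). Since each $H_j(\B)\subset\subset\B$, there is room: one can choose radii $r_n<1$ with $H_{n,0}(\overline\B)\subset r_n\B$ and then, by the above estimate, pick $\e(H_1,\dots,H_n)$ small enough that $\|H_{n,0}-G_{n,0}\|_{\overline\B}$ is less than $\tfrac12(1-r_n)$, say. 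This forces the nesting $G_{n+1,0}(\overline\B)\subset H_{n,0}^{-1}(\B)$ appropriately, hence $G_{n,0}^{-1}(\B)\supset H_{n-1,0}^{-1}(\B)$ and $G_{n,0}^{-1}(\B)\subset H_{n+1,0}^{-1}(\B)$ type interleaving. Taking unions over $n$ then gives $\Omega_G=\bigcup_n G_{n,0}^{-1}(\B)=\bigcup_n H_{n,0}^{-1}(\B)=\Omega_H$ as \emph{sets} — but of course they sit in the same $\C^2$, so this would even give equality, which is too strong; hence the perturbation must be arranged so that the two basins are only biholomorphic, not equal. The correct mechanism is the one used by Forn{\ae}ss: the biholomorphism is the locally uniform limit of the maps $H_{m,0}^{-1}\circ G_{m,0}$.

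\medskip

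So the real core of the argument is: define $\Phi_m:=H_{m,0}^{-1}\circ G_{m,0}$, a biholomorphism of $\C^2$. On $\overline\B$ we have $\|\Phi_m-\Phi_{m-1}\|=\|H_{m,0}^{-1}\circ G_{m,0}-H_{m-1,0}^{-1}\circ G_{m-1,0}\|$, and writing $G_{m,0}=G_m\circ G_{m-1,0}$, $H_{m,0}^{-1}=H_{m-1,0}^{-1}\circ H_m^{-1}$, this difference is governed by $\|H_m^{-1}\circ G_m-\mathrm{id}\|$ on the relevant compact set, which in turn is controlled by $\|H_m-G_m\|$ times a local inverse-Lipschitz constant of $H_m$. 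Choosing $\e(H_1,\dots,H_m)$ small enough (now also incorporating the inverse-Lipschitz constant of $H_m$ near $H_m(\overline\B)$, which is well defined and finite), we get $\sum_m\|\Phi_m-\Phi_{m-1}\|_{\overline\B'}<\infty$ on any fixed ball $\overline\B'$, uniformly, so $\Phi_m\to\Phi$ locally uniformly to a holomorphic map; the same argument applied to $\Phi_m^{-1}=G_{m,0}^{-1}\circ H_{m,0}$ (using $\|G_n-H_n\|=\|H_n-G_n\|$ and the fact that, having fixed $H_1,\dots,H_n$, the maps $G_n$ lie in a bounded family so have uniformly controlled inverse-Lipschitz constants) gives convergence of the inverses, so $\Phi$ is a biholomorphism of $\C^2$. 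Finally one checks $\Phi(\Omega_G)=\Omega_H$: if $z\in\Omega_G$ then $G_{n,0}(z)\in\B$ for some $n$, so $\Phi_m(z)=H_{m,0}^{-1}(G_{m,0}(z))$; since the tail maps keep $\B$ essentially invariant and $\Phi_m\to\Phi$, one deduces $H_{n',0}(\Phi(z))\in\B$ for some $n'$, i.e. $\Phi(z)\in\Omega_H$, and symmetrically for $\Phi^{-1}$.

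\medskip

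The main obstacle, and the only place requiring care, is the \emph{order of quantifiers} in choosing $\e$: the bound $\e(F_1,\dots,F_n)$ is allowed to depend on the \emph{specific} maps $H_1=F_1,\dots,H_n=F_n$ but must be chosen \emph{before} $G_{n+1},G_{n+2},\dots$ are known, and the estimates on $\Phi_m-\Phi_{m-1}$ at level $m$ involve inverse-Lipschitz constants of $H_m$ on a compact set that a priori depends on $G_1,\dots,G_{m}$ (through $G_{m,0}(\overline\B)$). The fix is that once $\|H_j-G_j\|\le\e(H_1,\dots,H_j)$ is imposed for $j\le m$ with $\e$ chosen small enough, one proves inductively that $G_{m,0}(\overline\B)$ is contained in a fixed neighbourhood of $H_{m,0}(\overline\B)$ (e.g. inside $\B$), so all the needed inverse-Lipschitz constants can be taken with respect to that fixed compact set, which depends only on $H_1,\dots,H_m$. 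One then also needs the geometric-series condition $\sum\|\Phi_m-\Phi_{m-1}\|<\infty$ on every compact set, not just $\overline\B$; this is handled by taking $\e(H_1,\dots,H_m)$ to decay fast enough, e.g. forcing $\|\Phi_m-\Phi_{m-1}\|_{\overline{\B(0,m)}}\le 2^{-m}$, which is a legitimate requirement since $\Phi_{m-1}$ and the family of admissible $G_m$ are already determined up to a bounded set at the moment $\e(H_1,\dots,H_m)$ is chosen.
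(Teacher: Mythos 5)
The core machinery you identify --- the maps $\Phi_m := H_{m,0}^{-1}\circ G_{m,0}$, the telescoping estimate $\|\Phi_{m+1}-\Phi_m\|\leq M_{m+1}N_{m+1}\|H_{m+1}-G_{m+1}\|_{\overline\B}$ (with $M_{m+1}$, $N_{m+1}$ Lipschitz constants depending only on $H_1,\dots,H_{m+1}$), and the careful choice of $\e(H_1,\dots,H_m)$ so the series converges --- is precisely what the paper does, and you also correctly isolate the quantifier issue. However, there is a genuine flaw in the endgame. You claim that $\Phi_m\to\Phi$ can be forced on $\overline{\B(0,m)}$, so that $\Phi$ is a biholomorphism of $\C^2$. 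This cannot work: the hypotheses only constrain $G_n$ on $\overline\B$ (namely $G_n(\B)\subset\B$ and $\|H_n-G_n\|_{\overline\B}\leq\e$), and say nothing at all about $G_n$ outside $\overline\B$. Consequently $G_{m,0}(\overline{\B(0,m)})$ is completely uncontrolled, and no choice of $\e(H_1,\dots,H_m)$ (which is chosen \emph{before} $G_m$) can make $\|\Phi_m-\Phi_{m-1}\|_{\overline{\B(0,m)}}$ small. The reason the paper's argument works is that convergence is established only on the exhausting compacts $U_n:=G_{n,0}^{-1}(\overline\B)$ of $\Omega_G$: for $m\geq n$ one has $G_{m,0}(U_n)\subset\overline\B$, so the telescoping estimate may be carried out entirely on $\overline\B$, where the constraints live. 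Thus $\Phi$ is constructed only as a holomorphic map $\Omega_G\to\C^2$, not as an automorphism of $\C^2$.

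Because $\Phi$ is not globally defined, the ``symmetric'' argument with $\Phi_m^{-1}=G_{m,0}^{-1}\circ H_{m,0}$ also runs into trouble --- the Lipschitz constants of $G_{m+1}^{-1}$ are not constants depending only on $H_1,\dots,H_{m+1}$, so the same quantifier issue you flagged cannot be resolved for the inverses. The paper sidesteps this entirely: injectivity of $\Phi$ is obtained from Hurwitz's theorem (it suffices that $\|\Phi-{\sf id}\|_{\overline\B}$ be small enough that $d_0\Phi$ is nonsingular, since $\Phi$ is a locally uniform limit of injective maps and hence either injective or degenerate), and surjectivity is obtained by a nesting argument: one chooses, for each $n$, a $\delta_n>0$ depending only on $H_1,\dots,H_n$ such that every injective holomorphic $F$ on $\overline\B$ with $\|F-H_{n,0}^{-1}\|_{\overline\B}\leq\delta_n$ satisfies $V_{n-1}\subset F(\overline\B)$, and then shows $\|\Phi\circ G_{n,0}^{-1}-H_{n,0}^{-1}\|_{\overline\B}\leq\delta_n$ so that $V_{n-1}\subset\Phi(U_n)$; taking the union over $n$ gives $\Phi(\Omega_G)=\Omega_H$. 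You would need to replace your $\C^2$-biholomorphism claim with this kind of exhaustion argument for the proof to go through.
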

\begin{proof}
Let $(H_{n})_{n\geq 1}$ and $(G_{n})_{n\geq 1}$ be sequences as above. We will show how to choose the constants $(\e(H_1, \dots , H_n))_{n\geq 1}$  to obtain a biholomorphism between $\Omega_{G}$ and $\Omega_H$.

For all $n\geq 0$ denote $ U_n:=G_{n,0}^{-1}(\overline{\B})$, $V_n:=H_{n,0}^{-1}(\overline{\B})$, and define
$$
\Phi_n:=H_{n,0}^{-1}\circ G_{n,0}.
$$
 Notice that $\Phi_n$ is a holomorphic automorphisms of $\C^2$ satisfying $\Phi_n(U_n)=V_n$.
We have that $$\Phi_{n+1}\circ\Phi_n^{-1}=H_{n,0}^{-1}\circ (H_{n+1}^{-1}\circ G_{n+1})\circ H_{n,0}.$$
Thus
\begin{align*}
\|\Phi_{n+1}-\Phi_{n}\|_{ U_n}&=\|\Phi_{n+1}\circ \Phi_n^{-1}-{\sf id}\|_{V_n}\leq M_{n+1}\|H_{n+1}^{-1}\circ G_{n+1}-{\sf id}\|_{\overline\B}\\
&\leq M_{n+1}N_{n+1}\|H_{n+1}-G_{n+1}\|_{\overline\B}
\leq M_{n+1}N_{n+1}\e(H_1, \dots , H_{n+1}),
\end{align*}
where $M_{n+1}>0$ is the Lipschitz constant of $H_{n,0}^{-1}$ on $H^{-1}_{n+1}(\overline \B)$  and $N_{n+1}>0$ is the Lipschitz constant of $H_{n+1}^{-1}$ on $\overline\B$.
Hence if  for all $n\geq 0$ we choose  $$\e(H_1, \dots , H_{n+1})\leq \frac{1}{2^{n+1}M_{n+1}N_{n+1}},$$ the sequence  $(\Phi_n)$ converges to a holomorphic map $\Phi$ on $\Omega_G$.

Recall that there exist $\tilde{\delta}>0$ such that for every holomorphic map $F:\overline\B\rightarrow\C^2$ which satisfies $\|F-{\sf id}\|_{\overline\B}\leq \tilde\delta$ we have that the differential $d_0F$ is nonsingular.
By Hurwitz's theorem the map $\Phi$ is either injective or degenerate.
Assuming that
$$\e(H_1, \dots , H_{n+1})\leq \frac{\tilde\delta}{2^{n+1} M_{n+1}N_{n+1}}$$
we have, since  $\Phi_0:= {\sf id}$,
$$
\|\Phi-{\sf id}\|_{\overline\B}\leq \sum_{k=0}^{\infty}\|\Phi_{k+1}-\Phi_k\|_{\overline\B}\leq \tilde\delta,
$$
and thus $\Phi$ is injective.

To prove surjectivity, observe that since  for every $n\geq 1$  we have $H_{n}(\B)\subset\subset \B $ it follows that $V_{n-1}\subset{\rm int}( V_{n})$ hence there exist $\delta_n>0$ such that for every injective holomorphic  map $F\colon \overline \B\to \C^2$ which satisfies $\|F-H_{n,0}^{-1}\|_{\overline\B}\leq \delta_n$ it follows that $V_{n-1}\subset F(\overline \B)$. Clearly the choice of $\delta_n$ depends only on $H_1,\ldots, H_{n}$. Let us assume that
$$\e(H_1, \dots , H_{n+1})\leq \frac{\min_{1\leq j\leq n}\delta_j}{2^{n+1} M_{n+1}N_{n+1}}.$$
We have
\begin{equation}\label{injectivity}
\|\Phi-\Phi_n\|_{U_n}\leq\sum_{k=n}^{\infty}\|\Phi_{k+1}-\Phi_k\|_{U_n}\leq \delta_n
\end{equation}
or equivalently
$$
\|\Phi\circ G_{n,0}^{-1}-H_{n,0}^{-1}\|_{\overline\B}\leq \delta_n
$$
which implies $V_{n-1}\subset\Phi(U_n)$. Since this holds for every $n$ it follows that $\Phi\colon \Omega_G\to \Omega_H$ is surjective.

\end{proof}

The following result is slightly modified version of   \cite[Theorem 1.4]{F}, we sketch the proof for the reader's convenience.
\begin{proposition}\label{lem:short} Let $(d_k)$ be a sequence of integers with $d_k\geq 2$, and let us define  $H_n(z,w)=((\frac{z}{2})^{d_n}+2^{-d_n\cdots d_1}w,2^{-d_n\cdots d_1}z)$.
Then  $\Omega_H:=\bigcup_{k\geq 0} H_{k,0}^{-1}(\B)$ is a Short $\C^2$.
\end{proposition}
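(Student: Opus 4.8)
We must verify three properties of $\Omega_H$: that it is an increasing union of balls, that its Kobayashi pseudometric vanishes identically, and that it carries a nonconstant plurisubharmonic function bounded from above (so that in particular $\Omega_H\not\cong\C^2$). The first is immediate. Writing $c_n:=2^{-d_1\cdots d_n}$, for $(z,w)\in\overline\B$ one has $c_n\le 2^{-2^n}\le\tfrac14$ and $|(z/2)^{d_n}|\le 2^{-d_n}\le\tfrac14$, so $\|H_n(z,w)\|^2\le(\tfrac14+\tfrac14)^2+\tfrac1{16}<1$; hence $H_n(\overline\B)\subset\subset\B$, the basin $\Omega_H$ is well defined, $H_{n,0}^{-1}(\B)\subset H_{n+1,0}^{-1}(\B)$ for all $n$, and each $H_{n,0}^{-1}(\B)$ is a biholomorphic image of $\B$, so $\Omega_H$ is an increasing union of balls. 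Moreover, since $H_j(0)=0$ and the same bound gives $\|(dH_j)_x\|\le\tfrac34$ for $x\in\overline\B$ (the only nonconstant entry of $(dH_j)_x$ is $\tfrac{d_j}{2}(x_1/2)^{d_j-1}$, of modulus $\le d_j2^{-d_j}\le\tfrac12$), one gets $\|H_j(x)\|\le\tfrac34\|x\|$ on $\overline\B$, a fact I use twice below.

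For the Kobayashi pseudometric, fix $p\in\Omega_H$, $v\in\C^2$, and $m$ with $p\in H_{m,0}^{-1}(\B)$. For $n>m$, the distance-decreasing property of the Kobayashi pseudometric for the inclusion $H_{n,0}^{-1}(\B)\hookrightarrow\Omega_H$ and for the biholomorphism $H_{n,0}\colon H_{n,0}^{-1}(\B)\to\B$ gives
\[
k_{\Omega_H}(p,v)\ \le\ k_{\B}\big(H_{n,0}(p),(dH_{n,0})_p v\big).
\]
Now $H_{n,0}(p)=H_{n,m}(H_{m,0}(p))$ with $H_{m,0}(p)\in\B$, so by the bound $\|H_j(x)\|\le\tfrac34\|x\|$ the point $H_{n,0}(p)$ stays in a fixed ball $\overline B(0,\rho_0)\subset\subset\B$ for all $n>m$, whence $k_{\B}(H_{n,0}(p),\cdot)\le C_0\|\cdot\|$ with $C_0$ independent of $n$. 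By the chain rule, collecting the finitely many differentials $(dH_j)_{H_{j-1,0}(p)}$ with $j\le m$ into a constant $C(p)$, we get $\|(dH_{n,0})_p\|\le C(p)(\tfrac34)^{n-m}$, and therefore $k_{\Omega_H}(p,v)\le C_0C(p)(\tfrac34)^{n-m}\|v\|\to 0$ as $n\to\infty$. Thus $k_{\Omega_H}\equiv 0$.

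The construction of the plurisubharmonic function is the main point, and the only place where $M_n:=d_1\cdots d_n\to\infty$ is used; here I would follow \cite{F}. Let $P_n$ denote the first coordinate of $H_{n,0}$, a polynomial of degree $M_n$ satisfying $P_n=(P_{n-1}/2)^{d_n}+2^{-M_n-M_{n-1}}P_{n-2}$ (with $P_0=z$). Each $u_n:=\tfrac1{M_n}\log|P_n|$ is plurisubharmonic on $\C^2$, the family is locally uniformly bounded above, and since the error term $2^{-M_n-M_{n-1}}P_{n-2}$ is doubly-exponentially small compared with $(P_{n-1}/2)^{d_n}$ the $u_n$ converge outside a pluripolar set, so $u:=(\limsup_n u_n)^{*}$ is plurisubharmonic. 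On $\Omega_H$ one has $|P_n(p)|\le\|H_{n,0}(p)\|\le(\tfrac34)^{n-m}$, hence $u\le 0$ there. The delicate, and genuinely hard, point is that $u$ is nonconstant on $\Omega_H$: at a point $p$ well inside $\B$ one already has $|P_1(p)|\ll 1$, and the recursion then pins $u(p)$ to a fixed strictly negative value; while at a point $q\in H_{N,0}^{-1}(\B)$ with $\|H_{N,0}(q)\|$ close to $1$ (so $q$ lies near the ``far'' boundary of the $N$-th ball), the recursion started at step $N$ only loses $O(1/M_N)$ in the normalized logarithms, so $u(q)\ge -C/M_N$; since $M_N\to\infty$, $u$ takes values on $\Omega_H$ arbitrarily close to $0$. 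Hence $u|_{\Omega_H}$ is a nonconstant plurisubharmonic function bounded above, so $\Omega_H\not\cong\C^2$, and combined with the first two parts $\Omega_H$ is a Short $\C^2$. Making this last step precise — well-definedness of $u$ and the two value estimates — is what requires real care, and for it I refer to \cite{F}.
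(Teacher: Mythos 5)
Your parts (1)--(3) are fine; the Kobayashi argument via the distance-decreasing property and the derivative bound $\|(dH_j)_x\|\le\tfrac34$ is a clean variant of the paper's explicit large-disk construction and achieves the same thing. The issues are in part (4), where your construction differs from the paper's in a way that matters and where you leave the crux incomplete.

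First, you set $u_n:=M_n^{-1}\log|P_n|$ using only the first coordinate and no lower cutoff, so $u_n\equiv-\infty$ on the zero set of $P_n$ (in particular at the origin, which is the fixed point), forcing you to pass to the regularization $(\limsup u_n)^*$ and to a pluripolar-set argument. The paper instead works with $\phi_k:=\max\{|h_1^k|,|h_2^k|,\eta_k\}$ where $\eta_k=2^{-M_k}$, and $\psi_k:=M_k^{-1}\log\phi_k$. The floor $\eta_k$ is not cosmetic: it makes $\psi_k\ge-\log 2$ everywhere, and the inequality $\phi_{k+1}\le 2\phi_k^{d_{k+1}}$ then gives $\psi_{k+1}\le\psi_k+\tfrac{\log 2}{M_{k+1}}$, so that after adding a summable tail one gets a genuinely monotone decreasing sequence and pointwise convergence to a plurisubharmonic $\psi$ with no regularization or pluripolar exceptional set. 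Your version is not wrong, but you are taking on extra technical debt that the floor is specifically designed to avoid, and it is precisely this that causes trouble at the origin (where your assertion that $u$ is ``pinned to a fixed strictly negative value'' is not justified, since $u_n(0)=-\infty$ for all $n$).

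Second, and more importantly, your nonconstancy argument is a sketch that you explicitly defer to \cite{F}. The paper does not prove nonconstancy by tracking values at interior versus near-boundary points. Instead it first establishes the sharper statement $\Omega_H=\{\psi<0\}$ (with $\psi(0)=-\log 2$), notes that $\Omega_H\neq\C^2$ because $\|H_{k,0}(8,0)\|\to\infty$, and then runs a short maximum-principle / subaveraging argument: if $\psi\equiv s<0$ on $\Omega_H$, take a boundary point $p$ of $\Omega_H$ on a maximal inscribed sphere $\partial\B(0,R)$; then $\psi(p)\ge 0$, yet the sub-mean-value inequality over a small ball around $p$ (a definite fraction of which lies in $\Omega_H$, where $\psi=s<0$) together with upper semicontinuity forces $\psi(p)<0$, a contradiction. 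This is both complete and shorter than the quantitative estimates you gesture at. Since the nonconstancy is, as you say yourself, ``the only place where $M_n\to\infty$ is used,'' this is exactly the step a proof of this proposition cannot leave to a reference.
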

\begin{proof}
We prove the following:
\begin{enumerate}
\item $\Omega_H$ is non-empty, open, connected set in  $\C^2$,
\item $\Omega_H$ is the increasing union of its subdomains $\Omega_j:=H_{j,0}^{-1}(\B)$ which are biholomorphic to the unit ball $\B$,
\item the infinitesimal Kobayashi metric of $\Omega_H$ vanishes identically,
\item  there is a non-constant plurisubharmonic function $\psi$ on $\C^2$ satisfying $\Omega_H=\{\psi<0\}$.
\end{enumerate}

First observe that $H_k$ is fixing the origin and $H_k(\B)\subset\subset\B$ for every $k\geq 1$. Since $H_k$ are holomorphic automorphisms of $\C^2$ we immediately  obtain (1) and (2).

For (3) let us fix $(p,\zeta)$, where $p\in \Omega_H$ and $\zeta\in T_p\Omega_H$. Pick $R>0$. Since $p\in \Omega_H$ we have $p_k:=H_{k,0}(p)\rightarrow 0$. Next we denote $\zeta_k:= d_pH_{k,0}(\zeta)$. Since $(H_{k,0})_k$ converge on $\Omega_H$ to a constant map it follows that $\zeta_k\rightarrow 0$. Now we can find $k>0$ such that the map $\eta_k(w):=p_k+wR\zeta_k$ satisfies  $\eta_k(\mathbb{D})\subset \B$, where $\mathbb{D}\subset \C$ denotes the unit disk. Finally we define
$\eta:\mathbb{D}\rightarrow \Omega_H$ as $\eta(w):=H_{k,0}^{-1}\circ \zeta_k (w)$. It follows from the definition of $\eta$ that $\eta(0)=p$ and
$\eta'(0)=d_{p_k}H^{-1}_{k,0}(R\zeta_n)=R\zeta$, hence (3) is proved.

\medskip
It remains to show that also property (4) holds. Let us write $H_{k,0}=(h_1^k,h_2^k)$ and  $\eta_k:=2^{-d_k\cdots d_1}$. We define
$\phi_k(z):=\max\{|h_1^k|,|h_2^k|,\eta_k\}$ and
$$
\psi_k:=\frac{\log{\phi_k}}{d_k\cdots d_1}.
$$

Claim 1: {\it The functions $\psi_k$ converge on $\C^2$ to a plurisubharmonic function $\psi$.}

We show first that $\phi_{k+1}\leq 2\phi_k^{d_{k+1}}$ on $\C^2$. Assume that $\phi_k\leq 1$. Then
\begin{align*}
\phi_{k+1}(z)&=\max\left\{\left|\left(\frac{h^k_1}{2}\right)^{d_{k+1}} +\eta_{k+1}h_2^{k}\right|,\eta_{k+1}\left|h_1^k\right|,\eta_{k+1}\right\}\\
&\leq \max\{\phi_k^{d_{k+1}}+\eta_k^{d_{k+1}},\eta_k^{d_{k+1}}\}\\
&\leq 2\phi_k^{d_{k+1}}.
\end{align*}

If $\phi_k> 1$, then
\begin{align*}
\phi_{k+1}(z)&=\max\left\{\left|\left(\frac{h^k_1}{2}\right)^{d_{k+1}} +\eta_{k+1}h_2^{k}\right|,\eta_{k+1}\left|h_1^k\right|,\eta_{k+1}\right\}\\
&\leq \max\{\phi_k^{d_{k+1}}+\eta_k^{d_{k+1}}\phi_k,\eta_k^{d_{k+1}}\phi_k,\eta_k^{d_{k+1}}\}\\
&\leq \phi_k^{d_{k+1}}+\phi_k\\
&\leq 2\phi_k^{d_{k+1}}.
\end{align*}

This way we obtain $\psi_{k+1}\leq \psi_k+\frac{\log 2}{d_{k+1}\cdots d_{1}}$. Now define $\tilde{\psi}_k:=\psi_k+\sum_{j>k}\frac{\log 2}{d_j\cdots d_1}$ and  observe that $\tilde{\psi}_{k+1}\leq\tilde{\psi}_{k}$.
The plurisubharmonic functions $\tilde{\psi}_k$ form a monotonically decreasing sequence, whose limit $\psi$ is therefore also plurisubharmonic.  Since $\tilde{\psi}_k-\psi_k\to 0$, it follows that
$\psi_k\to \psi.$

Claim 2:  {\it We have $\Omega_H=\{\psi<0\}$}.

Let us assume that $\psi(z_0)<0$, then there exist $k>0$ and $s<0$ such that $\frac{\log\phi_k(z_0)}{d_k\cdots d_1}<s<0$, hence $\phi_k(z_0)<e^{s\cdot d_k\cdots d_1}$.
From the definition of $\phi_k$ it follows that $\|H_{k,0}(z_0)\|<2e^{s\cdot d_k\cdots d_1}$, hence $H_{k,0}(z_0)\rightarrow 0$ and therefore $z_0\in \Omega_H$.

Next we assume that $z_0\in\Omega_H$. Then there exist $k_0>0$ such that $H_{k,0}(z_0)\in\B$ for all $k>k_0$. But this implies that $\psi_k(z_0)<0$, hence $\psi(z_0)\leq 0$. Since $H_{k,0}(0)=0$ for all $k$ it follows that $\psi_k(0)=-\log 2$ for all $k$, hence $\psi(0)=-\log2<0$. We have seen that $\psi\leq0$ on $\Omega_H$ and $\psi<0$ at some point in $\Omega_H$, therefore  it follows from  the maximum principle  that $\psi <0$ on $\Omega_H$.

Claim 3: {\it $\psi$ is not constant on $\Omega_H$.}

Suppose that the contrary is true, i.e. $\psi|_{\Omega_H}\equiv s <0$. First observe that $\|H_{k,0}(8,0)\|\rightarrow\infty$, hence $\Omega_H$ is not all $\C^2$.
Pick $R>0$ such that $\B(0,R)\subset\Omega_H$ and there exist a point $p\in \partial\B(0,R)\cap\partial\Omega_H$. Since $\Omega_H=\{\psi<0\}$ it follows that
 $\psi(p)\geq0$. By subaveraging property of plurisubharmonic functions, $\psi(p)$ is bounded above by the average on any small ball $\B(p,\epsilon)$. If  $\epsilon$ small enough then $\psi=s<0$ on  more than one third of the ball $\B(p,\epsilon)$, and since $\psi$ is upper semicontinuous, this leads to a contradiction.

\end{proof}

%We also recall the following  lemma for the reader's convenience.

%\begin{lemma}(Lambda Lemma \cite{PM})\label{lem:lambda} Let G be a holomorphic automorphism of $\C^2$ with a saddle fixed point at the origin. Denote $W_G^s(0)$ and $W_G^u(0)$ the stable and unstable manifolds respectively. Let $p\in W_G^s(0)\backslash \{0\}$ and $q\in W_G^u(0)\backslash \{0\}$, and let $D(p)$ and $D(q)$ be holomophic disks through $p$ and $q$, respectively transverse to $W_G^s(0)$ and $W_G^u(0)$. Let $\e>0$. Then there exists $N\in \N$  and $N_1>2N+1$, and a point $x\in D(p)$ with $G^{N_1}(x)\in D(q)$ such that $\|G^n(x)-G^n(p)\|<\e$ and $\|G^{N_1-n}(x)-G^{N_1-n}(q)\|<\e$ for $0\leq n\leq N$, and $\|G^n(x)\|<\e$ when $N<n<N_1-N$.
%\end{lemma}

\section{Proof of   Theorem \ref{thm:main2}}

The proof of Theorem \ref{thm:main2} is based on the iterative construction contained in the following proposition, whose proof is postponed to Section \ref{inductionprop}.

\begin{proposition}\label{prop:henon} Let $a=\frac{1}{2}$. There exists a sequence of automorphims of $\C^2$
$$F_k(z,w)=(f_k(z)+aw,az),\qquad f_k(z)=z+O(z^2),\qquad k=0,1,2,\ldots$$
a sequence of points $P_n=(z_n,w_n)$ where $n=0,1,2,\ldots$, sequences  $\beta_n\rightarrow 0$,    $R_k\nearrow \infty$, strictly increasing sequences of integers  $(n_k)$ and $(N_k)$ satisfying $N_{k-1}\leq n_k\leq N_k$, and  a sequence of odd integers $(d_k)$, $d_k\geq 2$  such that the following properties are satisfied:

\begin{enumerate}
\item[(a)]   $\|F_{k}-F_{k-1}\|_{D(0,R_{k-1})\times\C}\leq 2^{-k}$ for all $k\geq 1$,
\item[(b)] $F_{k}(P_n)=P_{n+1}$ for all $0\leq n< N_k$,
\item[(c)] $\|P_{n_k}\|\leq \frac{1}{k}$ for all $k\geq 1$,
\item[(d)] $  |z_{N_{k}}|> R_{k}+3$ and $|z_n|+\beta_n< R_k$ for every $0\leq n<N_{k}$,
\item[(e)]   for all  $k\geq 1$, $\beta_n=\frac{1}{k}$ if $N_{k-1}<n< n_{k}$,
and $\beta_n=\frac{1}{k+1}$ if $n_k< n\leq N_k,$ and
$\beta_{n_k}< \frac{1}{k+1}$ is of the form $\beta_{n_k}=a^{2q_k}$, for some integer $q_k$,
\item[(f)]  $k|\log{\beta_{n_k}}|\leq d_k\cdots d_1$ for all $k\geq 1$,
\item[(g)] for all $0\leq s\leq k$,
$$F_{k}^j(\B(P_{n_s},\beta_{n_s}))\subset \subset\B(P_{n_s+j},\beta_{n_s+j}), \quad \forall\,1\leq j\leq N_{k}-n_s,$$
\item[(h)]  for all $ 1\leq s\leq k$,
$$
\|\Phi_{n_{s}}^{-1}\circ F_k^{n_{s}-n_{s-1}}\circ\Phi_{n_{s-1}}-H_{s}\|_{\overline{\B}}<\min\{\varepsilon(H_1,\dots, H_s),a^{d_s\cdots d_1}\},
$$
where   $\Phi_n:=P_n+z\cdot\beta_n$, $H_j(z,w)=((az)^{d_j}+a^{d_j\cdots d_1}w,a^{d_j\cdots d_1}z)$.
\end{enumerate}
\end{proposition}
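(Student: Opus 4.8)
The plan is to construct the data of Proposition~\ref{prop:henon} by induction on $k$, at each stage extending the finite sequence of points $P_0,\dots,P_{N_k}$, the map $F_k$, the radii $\beta_n$, and the integers $n_k,N_k,d_k$, using Runge approximation on $\C$ to modify only the one-variable transcendental function $f_k$ while leaving all earlier behaviour essentially undisturbed. The base of the induction sets $F_0$ to a transcendental H\'enon map with a fixed point at the origin whose differential is (a small perturbation of) the normal form needed to start the comparison with $H_1$; one picks $P_0$ near the origin so that its first handful of iterates under $F_0$ stay inside the relevant ball, and declares $n_1=0$ (or a small index), $N_0=0$, with $\beta_0$ of the prescribed form $a^{2q_0}$.

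For the inductive step, assume everything is defined up to index $N_{k-1}$ and up to the map $F_{k-1}$; I would proceed in the following order. First, choose a new escape radius $R_k$ much larger than $R_{k-1}$ and than $|z_n|+\beta_n$ for all $n\le N_{k-1}$. Second, use Runge approximation (as in \cite{ABFP}) to build $f_k$ agreeing with $f_{k-1}$ to high order on $D(0,R_{k-1})$ so that (a) holds, and so that the orbit of $P_{N_{k-1}}$ under $F_k$ first travels out to $|z|>R_k+3$ (giving the escaping half of the oscillation and condition (d)), then is steered back down so that at some index $n_k\in(N_{k-1},N_k]$ one has $\|P_{n_k}\|\le 1/k$ (the returning half, giving (c)); one continues the orbit a little past $n_k$ and sets $N_k$ once $|z_{N_k}|>R_k+3$ again. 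Third, at the index $n_k$ one has the freedom to prescribe $P_{n_k}$ very close to the origin and, crucially, to prescribe the $2$-jet of $F_k$ at $P_{n_k}$ (again by Runge approximation, adjusting $f_k$ near the single point $z_{n_k}$) so that, after conjugating by the affine rescalings $\Phi_{n_{k-1}},\Phi_{n_k}$, the composite $\Phi_{n_k}^{-1}\circ F_k^{\,n_k-n_{k-1}}\circ\Phi_{n_{k-1}}$ is as close as we like to the model map $H_k$; this forces the choice of $\beta_{n_k}=a^{2q_k}$ and of $d_k$ (take $d_k$ a large odd integer, chosen last, so that (f) holds and so that the rescaled return map matches the degree-$d_k$ model), giving (e), (f), (h). Fourth, set $\beta_n=1/k$ for $N_{k-1}<n<n_k$ and $\beta_n=1/(k+1)$ for $n_k<n\le N_k$, and verify the invariance of balls (g): along the bulk of the oscillation this is the statement that $F_k$ does not expand the relevant balls by more than the geometric factor built into $R_k$ and the slowly shrinking $\beta_n$, while near $n_k$ it follows from (h) together with the contraction properties of the model maps $H_k$ (which satisfy $H_k(\B)\subset\subset\B$, as noted in Proposition~\ref{lem:short}). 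One must of course also re-verify (g) and (h) for $s<k$ with $F_k$ in place of $F_{k-1}$, but this is immediate from (a) once $2^{-k}$ is small compared to the $\varepsilon$-gaps left at earlier stages: this is exactly the role of the constants $\varepsilon(H_1,\dots,H_s)$ from Lemma~\ref{lem:compare} and the strict inclusions in (g).

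The main obstacle, and the delicate part of the argument, is the simultaneous satisfaction of (c), (d), (g), (h) near the turning index $n_k$: one needs the orbit to escape far past $R_k$ and yet return to within $1/k$ of the origin, and at the moment of closest approach the map, in rescaled coordinates, must look like a specific superexponentially contracting polynomial model $H_k$. The tension is that Runge approximation lets us push the orbit around with great freedom, but each modification of $f_k$ must be small on the growing disk $D(0,R_{k-1})$ (to preserve (a) and all earlier conditions) and must not destroy the ball inclusions (g) for the portion of the orbit already fixed. The resolution is the usual one in this circle of ideas: the modifications forcing the large excursion are performed on an annulus in the $z$-plane far outside $D(0,R_{k-1})$, where smallness there costs nothing; the $2$-jet adjustment at $z_{n_k}$ is performed in a tiny disk around a point that can be chosen far from all previously used points; and the quantitative bookkeeping — how large $R_k$, how large $N_k-N_{k-1}$, how large $d_k$, how small $\beta_{n_k}$ — is done in the order $R_k\to$ (approximation realizing the excursion and return) $\to P_{n_k}$ and its jet $\to \beta_{n_k},q_k\to d_k\to N_k$, so that each choice only constrains later ones. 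Verifying that the contraction rates of the $H_k$, and hence of the true return maps via (h), aggregate to the superexponential rate required by Proposition~\ref{lem:short} is then a matter of unwinding the recursion $\eta_k=a^{d_k\cdots d_1}$, exactly as in the proof of Proposition~\ref{lem:short}.
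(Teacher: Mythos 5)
Your proposal correctly identifies the overall inductive scheme, the role of Runge approximation, and the tension at the turning index $n_k$, but it misses the single most important technical idea of the paper's proof and substitutes a mechanism that does not work. You suggest achieving condition (h) by ``prescribing the $2$-jet of $F_k$ at $P_{n_k}$'' so that the rescaled return map $\Phi_{n_k}^{-1}\circ F_k^{\,n_k-n_{k-1}}\circ\Phi_{n_{k-1}}$ matches the polynomial model $H_k$. This cannot be done by a local jet adjustment: the rescaled return map is a composition of $n_k-n_{k-1}$ H\'enon maps each of constant Jacobian $-a^2$, so its Jacobian is $(-a^2)^{n_k-n_{k-1}}\beta_{n_{k-1}}^2/\beta_{n_k}^2$, while $H_k$ has Jacobian $-a^{2\,d_k\cdots d_1}$. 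Matching these requires a precise arithmetic relation among $n_k-n_{k-1}$, $q_{k-1}$, $q_k$ and $d_k\cdots d_1$ (and forces the $d_j$ to be odd for the sign to work out), and the superexponential contraction $a^{d_k\cdots d_1}$ must be assembled from the long chain of constant-Jacobian factors, not from a single jet.

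The paper resolves this with Lemma~\ref{lem:henon}: it proves that the \emph{target} map
$$F^{-\ell}_k\circ\Phi_{\ell}\circ H_{k+1}\circ \Phi_{n_k}^{-1}\circ F^{n_k -N_k}_k$$
can be written exactly as a finite composition $\psi_N\circ\cdots\circ\psi_1$ of H\'enon-type maps $\psi_j(z,w)=(\varphi_j(z)+aw,az)$, using the shear $\tau(z,w)=(aw,az)$ to absorb the affine rescalings (this is where $\beta_{n_k}=a^{2q_k}$ is needed so that $\Phi_{n_k}^{-1}$ and $\Phi_\ell$ become powers of $\tau$) and explicit conjugation identities for $F_k^{-j}$. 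Only after having this decomposition can one choose distinct $z$-coordinates $z_j''$ for the connecting orbit $T_j$ and invoke Runge approximation to produce a single one-variable function $f_{k+1}$ agreeing with $\varphi_{j+1}$ (suitably translated) near each $z_j''$. In addition, the paper does not ``steer'' the orbit back to the origin by Runge alone: it invokes Lemma~7.5 from \cite{ABFP} (the Lambda Lemma at the saddle at $0$) to obtain a pre-existing $F_k$-orbit $(Q_j)$ passing within $1/(k+1)$ of the origin and then escaping, and the Runge step only builds the connecting segment $(T_j)$ from $P_{N_k}$ to $Q_0$. Without the decomposition Lemma and the Lambda-Lemma orbit, the inductive step as you describe it cannot be completed, so the gap is genuine.
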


Using this proposition we can now prove our main theorem.

\begin{proof}[Proof of Theorem \ref{thm:main2}] Let $(F_k)$ be a sequence of automorphisms of $\mathbb{C}^2$ satisfying conditions $(a)-(h)$ of  Proposition \ref{prop:henon}. The sequence $(F_k)$ converges uniformly on compact subsets to a transcendental H\'enon map $F\in{\rm Aut}(\C^2)$ with a saddle fixed point at the origin and with an unbounded orbit $(P_n)$, a  sequence
 $\beta_n\to 0$, a strictly increasing sequences of integers  $(n_k)$ and  a sequence of odd integers $(d_k)$, $d_k\geq 2$  such that the following properties are satisfied:
\begin{enumerate}
\item[(i)] $P_{n_k}\to 0$,
\item[(ii)] for all $k\geq 0$,
\begin{equation}\label{palle}
F^j(\mathbb{B}(P_{n_k},\beta_{n_k}))\subset \mathbb{B}(P_{n_k+j},\beta_{n_k+j}),\quad \forall\, j\geq 0.
\end{equation}
\item[(iii)]  $\lim_{k\rightarrow\infty}\frac{\log{\beta_{n_k}}}{d_k\cdots d_1}=0$,
\item[(iv)]  if for all  $k\geq 1$ we denote
  $$G_{k}:=\Phi_{n_{k}}^{-1}\circ F^{n_{k}-n_{k-1}}\circ\Phi_{n_{k-1}}\in {\rm Aut}(\C^2),$$
 then by combining conditions (g) and (h) it follows that $G_{k}(\B)\subset \B$ for all $k$, and
 \begin{equation}\label{saruman}
 \|G_{k}-H_{k}\|_{\overline{\B}}\leq\min\{\varepsilon(H_1,\dots, H_k),a^{d_k\cdots d_1}\},\quad \forall\,k\geq 1,
 \end{equation}
where $H_k$ denotes the holomorphic automorphism $$H_{k}(z,w):=((az)^{d_k}+a^{d_k\cdots d_1}w, a^{d_k\cdots d_1}z).$$
\end{enumerate}

It follows from Lemma \ref{lem:short}  that $\Omega_H:=\bigcup_{k\geq 0} H_{k,0}^{-1}(\B)$ is a Short $\C^2$.  By Lemma \ref{lem:compare} the basin $\Omega_G:=\bigcup_{k\geq 0} G_{k,0}^{-1}(\B)$ is biholomorphic to $\Omega_H$. Define
$$\Omega_F:=\bigcup_{k=0}^{\infty} F^{-n_k}(\mathbb{B}(P_{n_k},\beta_{n_k})).$$
Notice that $\Omega_F=\Phi_{0}(\Omega_G),$ and hence $\Omega_F$ is a Short $\C^k$.
We now show that $\Omega_F$ is contained in an oscillating Fatou component.

First of all,  the set $\Omega_F$ is contained in the Fatou set of $F$. Indeed, by the invariance of the Fatou set it is enough to prove that
for all $k\geq 0$, the ball $\B(n_k,\beta_{n_k})$ is contained in the Fatou set. But this follows from (\ref{palle}) since
the euclidean diameter of $F^j(\mathbb{B}(P_{n_k},\beta_{n_k}))$ is bounded for  $j\geq 0$ (in fact, it goes to 0 as $j\to \infty$).
For all $j\geq 0$ denote $\mathcal{F}_{n_j}$ the Fatou component containing $\B(P_{n_j},\beta_{n_j})$. Since $\Omega_F$ is connected, it is contained in the Fatou component $\mathcal{F}_{0}$.

Since $\beta_n\to 0$, by (\ref{palle}) and by the identity principle it follows that all limit functions on each $\mathcal{F}_{n_j}$ are constants.
We claim that for all $j\geq 0$,  if $i>j$, then $\mathcal{F}_{n_i}\neq \mathcal{F}_{n_j}$, which implies that they are all  oscillating wandering domains.
Assume by contradiction that $\mathcal{F}_{n_j}=\mathcal{F}_{n_i}$, and set $k:=n_i-n_j$.
Since the origin is a saddle point, there exists a neighborhood  $U$  of the origin that contains no periodic points of order less than or equal to $k$.
 Since the sequence $(P_n)$ oscillates, there exists a  subsequence  $(P_{m_\ell})$ of $(P_n)$ such that $P_{m_\ell}\to z \in U \setminus \{0\}.$ But then
$$F^{m_\ell-n_j}(P_{n_i})=F^{n_i-n_j}(P_{m_\ell})\to F^k(z)\neq z,$$
which contradicts $F^{m_\ell-n_j}(P_{n_j})=P_{m_\ell}\to z$.

  We complete the proof by showing that $\Omega_F=\mathcal{F}_0$. Suppose by contradiction that $\Omega_F\neq \mathcal{F}_0$.
For all $k\geq 1$, let us define the plurisubharmonic function $\psi_k$ as follows
$$
\begin{aligned}
\psi_k(z)& :=\frac{\log({\max\{\|F^{n_k}(z)-P_{n_k}\|,\beta_{n_k}\eta_k\}})}{d_k\cdots d_1}\\
& =\frac{\log({\max\{\|\Phi_{n_{k}}\circ G_{k,0}  \circ  \Phi_{0}^{-1}(z) -P_{n_k}\|,\beta_{n_k}\eta_k\}})}{d_k\cdots d_1},
\end{aligned}
$$
where $\eta_k:=a^{d_k\cdots d_1}$.

By (\ref{palle}) the limit functions of the sequence $F^k$ are constant on $\mathcal{F}_0$.   Since the sequence $(P_{n_k})$ is bounded,  it follows that for all compact subsets $K \subset  \mathcal{F}_0$ we have
$$
\|F^{n_k}(z)-P_{n_k}\|_K\to 0.
$$
As a consequence, we have that $(\psi_k)$ is locally uniformly bounded from above on $\mathcal{F}_0$ and that for all $z\in \mathcal{F}_0$,
$\limsup_{k\to\infty}\psi_{k}(z)\leq 0.$
For all $k\geq 1$ define the plurisubharmonic function as
$$\varphi_k(w):=\psi_k\circ \Phi_{0}(w)=\frac{\log({\max\{\|G_{k,0} (w)\|,\eta_k\}})+ \log{\beta_{n_k}}}{d_k\cdots d_1}.$$
The sequence $(\varphi_k)$ is clearly locally uniformly bounded from above on $\Phi_{0}^{-1}(\mathcal{F}_0)$ and
\begin{equation}\label{legolas}
\limsup_{k\to\infty}\varphi_{k}(z)\leq 0, \quad \forall z\in \Phi_{0}^{-1}(\mathcal{F}_0).
\end{equation}

We now show that   the sequence $(\varphi_k)$ converges on  $\Phi_{0}^{-1}(\mathcal{F}_0)$ to a  function $\varphi$ (which has to satisfy $\varphi\leq 0$ by (\ref{legolas})) which is plurisubharmonic on $\Omega_G=\Phi_{0}^{-1}({\Omega_F})$, strictly negative at the origin, and identically zero on $\Phi_{0}^{-1}({\mathcal{F}_0})\setminus \Omega_G$.

Once this is done, we conclude the proof in the following way: such function $\varphi$ is upper semicontinuous on the whole $\Phi_{0}^{-1}({\mathcal{F}_0})$ since it is identically zero on $\Phi_{0}^{-1}({\mathcal{F}_0})\setminus \Omega_G$, and thus it coincides with its upper   semicontinuous regularization, which is plurisubharmonic on $\Phi_{0}^{-1}({\mathcal{F}_0})$ since $(\varphi_k)$ is locally uniformly bounded from above on $\Phi_{0}^{-1}({\mathcal{F}_0})$ (see \cite[Proposition 2.9.17]{Klimek}). But this contradicts the maximum principle.

Since  $\lim_{k\rightarrow\infty}\frac{\log{\beta_{n_k}}}{d_k\cdots d_1}=0$ we only need to prove that  the sequence  $$\tilde \varphi_k(w):=\frac{\log({\max\{\|G_{k,0} (w)\|,\eta_k\}})}{d_k\cdots d_1}$$
converges to such a function $\varphi$.

Recall that $\|G_k-H_k\|\leq\eta_k$ on $\overline\B$ by (\ref{saruman}).  For all $w \in \Omega_G$ define $\theta_k(w):=\max\{\|G_{k,0} (w)\|,\eta_k\}$.
Recall that $\Omega_G=\bigcup_{j\in \N}G_{j,0}^{-1}(\B)$. Fix $j\geq 0$ and  let $w\in G_{j,0}^{-1}(\B)$. Then for all $k\geq j$ we have that
$G_{k,0}(w)\in \B$, and
\begin{align*}
\theta_{k+1}(w)&=\max\{\|G_{k+1,0} (w)\|,\eta_{k+1}\}\\
&\leq \max\{\|H_{k+1}(G_{k,0} (w))\|+\eta_{k+1},\eta_{k+1}\}\\
&\leq \max\{\|G_{k,0} (w)\|^{d_{k+1}}+ \eta_{k+1} \|G_{k,0} (w)\| +\eta_{k+1},\eta_{k+1} \}\\
&\leq \max\{\theta_k^{d_{k+1}}(w)+ 2\eta_{k}^{d_{k+1}} ,\eta_{k}^{d_{k+1}} \}\\
&\leq 3\theta_k^{d_{k+1}}(w).
\end{align*}

 Hence for $k\geq j$ we have, for all  $w\in G_{j,0}^{-1}(\B)$,
$$\tilde \varphi_{k+1}(w)\leq \tilde\varphi_k(w)+\frac{\log 3}{d_{k+1}\cdots d_1}.$$
This implies that the sequence of plurisubharmonic functions on $G_{j,0}^{-1}(\B)$
$$\tilde \varphi_k+\sum_{j>k}\frac{\log 3}{d_{j}\cdots d_1}$$
is monotonically decreasing, hence its limit $\varphi$ exists and is plurisubharmonic on  $G_{j,0}^{-1}(\B)$. Since this holds for every $j\geq 0$ we obtain that the limit $\varphi$ exists and is plurisubharmonic on $\Omega_G.$
Notice that $\varphi(0)=-\log 2<0$.
If $w\in \Phi_{0}^{-1}({\mathcal{F}_0})\setminus \Omega_G$,  then  we have that $\|G_{k,0}(w)\|\geq 1$ for all $k\in \N$, which implies
$\tilde\varphi_k(w)\geq 0$ and thus $\liminf_{k\to \infty}\tilde\varphi_k(w)\geq 0$.  But then  (\ref{legolas})  implies that $\tilde\varphi_k(w)$ converges to 0.

\end{proof}

\section{Proof of Proposition \ref{prop:henon}}\label{inductionprop}
We prove this proposition by induction on $k$. We start the induction by letting $R_0=1$, $n_0=N_0=0$, $\beta_0=1$, $q_0=0$, $P_0=(z_0,w_0)$ with $|z_0|>6$ and $F_0(z,w)=(z+aw,az)$, such that all conditions are satisfied for $k =0$. Let us suppose that conditions (a)---(h) hold for certain $k$, and proceed with the constructions satisfying the conditions for $k+1$.

By Lemma 7.5 from \cite{ABFP}, which relies on the Lambda Lemma, there exist a finite $F_k$ orbit $(Q_j)=(z_j',w_j')_{0\leq j \leq M}$ such that:
\begin{itemize}
\item $\|Q_{\ell}\|<\frac{1}{k+1}$ for some $0<\ell<M$,
\item for small enough $0<\theta<\frac{1}{k+2}$ the three disks
$$\overline{D}(z_{N_k},\beta_{N_k}),\qquad \overline{D}(w_0'/a, \theta), \qquad \overline{D}(z_M', \theta)$$
are pairwise disjoint, and  disjoint from the polynomially convex set
\begin{equation}\label{defk}
K:=\overline{D}(0,R_k)\cup \bigcup_{0\leq i<M} \overline{D}(z_i',\theta).
\end{equation}
\end{itemize}

%\begin{remark}{\color{red} It follows directly from the construction of a new orbit that $Q_j\neq P_{N_k}$ for all $0< j \leq M$ and $Q_{-1}:=F_k^{-1}(Q_0)\neq P_{N_k}$. Since $F_k$ is a one to one map and $F_k(Q_j)=Q_{j+1}$ for all $-1\leq j < M$ and $F_k(P_j)=P_{j+1}$ for all $0\leq j<N_k$ we can see that $\{Q_0,\ldots,Q_M\}\cap \{P_j\}_{0<j\leq N_k}=\emptyset$. Finally since $Q_j\neq P_{N_k}$ for all $0\leq j \leq M$ we also obtain $Q_j\neq P_{0}$ for all $0\leq j \leq M$. This proves that we have not produced any periodic cycles.}
%\end{remark}

By continuity of $F_k$ there exists
$0<s_\ell<\frac{\theta}{2}$ small enough such that  for all $0< j\leq M-\ell$, $$F_k^j(\B(Q_\ell,s_\ell))\subset \subset \B(Q_{\ell+j},\frac{\theta}{2})$$ and such that for all $0< j\leq \ell$,
$$F_k^{-j}(\B(Q_\ell,s_\ell))\subset\subset \B(Q_{\ell-j}, \frac{\theta}{2}).$$
Moreover we can assume that
\begin{equation}\label{aragorn}
 s_\ell=a^{2q_{k+1}}\quad {\mbox {\rm for some}}\quad q_{k+1}\geq \ell+2.
 \end{equation}
We define  $\Phi_{\ell}=Q_{\ell}+s_{\ell}\cdot z.$

\begin{figure}[t]
\centering
\label{fig:Detour}
\includegraphics[width=5in]{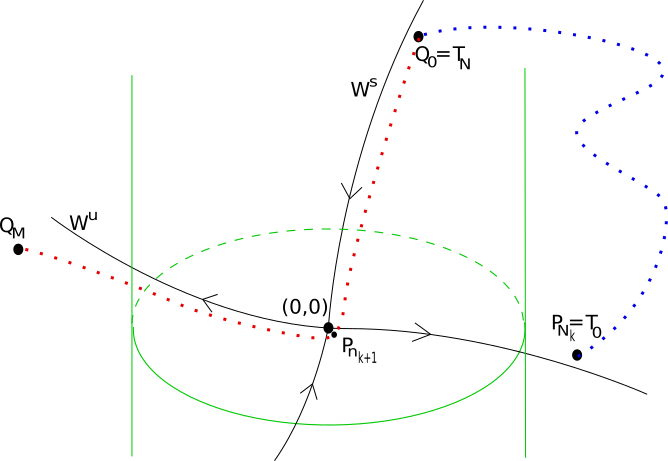}
\caption{A sketch of the piece of orbit constructed at step $k+1$. In green: the boundary of the cylinder $D(0, R_k) \times \mathbb C$. In red, the points $Q_n = (z_n^\prime, w_n^\prime)$ constructed using the Lambda Lemma. In blue, the points $T_n = (z_n^{\prime \prime}, w_n^{\prime \prime})$ connecting $T_0 = P_{N_k}$ to $T_N = Q_0$.}
\end{figure}

Our next goal is to construct the map $F_{k+1}$ with
a piece of orbit $T_0:=P_{N_k}, T_1,\dots , T_{N}:=Q_0$ such that
the iterate $F_{k+1}^{N}$ approximates the composition $F^{-\ell}_k\circ\Phi_{\ell}\circ H_{k+1}\circ \Phi_{n_k}^{-1}\circ F^{n_k -N_k}_k$ arbitrarily well near the point $P_{N_k}$, where
$$
H_{k+1}(z,w):=((az)^{d_{k+1}}+a^{d_{k+1}\cdots d_1}w, a^{d_{k+1}\cdots d_1}z),
$$
and where $d_{k+1}\geq 2$ is an odd integer to be determined later. See Figure \ref{fig:Detour} for a sketch of the piece of orbits constructed in step $k+1$.

\begin{lemma}\label{lem:henon} For every sufficiently large odd integer $d_{k+1}\geq 2$ the map $$F^{-\ell}_k\circ\Phi_{\ell}\circ H_{k+1}\circ \Phi_{n_k}^{-1}\circ F^{n_k -N_k}_k$$ can be written as a finite composition $\psi_{N}\circ\ldots\circ \psi_1$ of maps of the form $\psi_j(z,w)=(\varphi_j(z)+aw,az)$, with $\varphi_j\colon \C\to \C$ holomorphic.
\end{lemma}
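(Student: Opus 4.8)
The plan is to show that any map of the form $G(z,w) = (g(z) + aw, az)$ with $g$ an entire function (and $a = \tfrac12 \neq 0$) is a finite composition of ``elementary'' H\'enon maps $\psi_j(z,w) = (\varphi_j(z) + aw, az)$, and then to check that the specific map $F^{-\ell}_k\circ\Phi_{\ell}\circ H_{k+1}\circ \Phi_{n_k}^{-1}\circ F^{n_k -N_k}_k$ has this shape. The key algebraic fact is that two elementary H\'enon maps compose into something that is \emph{not} elementary, but whose structure is controlled: writing $\psi_j$ in matrix-like form, a product $\psi_2 \circ \psi_1$ sends $(z,w)$ to $(\varphi_2(\varphi_1(z)+aw) + a^2 z,\ a\varphi_1(z) + a^2 w)$. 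So rather than trying to factor a general polynomial-degree map directly, I would exploit the fact that $g$ here is a polynomial composed with affine maps, so its degree is $d_{k+1}$ (times lower-order corrections coming from the $F_k^{\pm 1}$ factors), and proceed by an induction on the degree of $g$.

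\textbf{Reduction to a normal form.} First I would observe that each factor in the composition is, up to the conjugating affine maps $\Phi_\ell$, $\Phi_{n_k}$, itself a composition of elementary H\'enon maps or inverses thereof: $F_k^{n_k - N_k}$ and $F_k^{-\ell}$ are iterates (forward and backward) of the single H\'enon map $F_k(z,w) = (f_k(z) + aw, az)$, which is already elementary; its inverse $F_k^{-1}(z,w) = (w/a,\ (z - f_k(w/a))/a)$ is the ``transpose-type'' elementary map. The affine maps $\Phi_\ell$, $\Phi_{n_k}$ and their inverses are affine automorphisms of $\C^2$, and $H_{k+1}$ is visibly elementary. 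So the whole composite $G$ lies in the group generated by elementary H\'enon maps and affine maps. The real content is therefore: any element of the form $(z,w)\mapsto(g(z)+aw,az)$ in this group, with $g$ entire, can be rewritten as a composition of elementary maps \emph{of exactly that fixed last coordinate $az$} — i.e. matching the prescribed shape $\psi_j(z,w) = (\varphi_j(z)+aw, az)$ with the \emph{same} parameter $a$. Here one uses that the map has constant Jacobian $-a^2$ and fixed linear behavior in the second slot.

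\textbf{The factorization lemma.} The heart is the following: if $g$ is a polynomial of degree $d$ (the case we need, since all the ingredient functions are polynomials after the analysis, with the transcendental $f_k$ appearing only through finitely many iterates — wait, $f_k$ is entire, not polynomial; I would handle this by noting that in Lemma~\ref{lem:henon} we only need the composition to be a finite composition of $\psi_j(z,w)=(\varphi_j(z)+aw,az)$ with $\varphi_j$ \emph{holomorphic}, not polynomial, so entirety suffices and no degree bookkeeping is needed). The cleanest route: show directly that the group generated by all elementary H\'enon maps $(z,w)\mapsto(\varphi(z)+aw,az)$, $\varphi$ entire, together with affine maps, is such that every element of the specified form $(z,w)\mapsto(g(z)+aw,az)$ already \emph{is} a finite composition of such elementary maps. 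One implements this by the standard ``shear'' trick from the theory of polynomial automorphisms of $\C^2$ (Jung–van der Kulk / Friedland–Milnor, adapted to the entire-coefficient setting): repeatedly peel off elementary factors to reduce to the affine case, tracking that each peeled factor has the required last coordinate $az$. Concretely, conjugating an elementary map by the linear involution $(z,w)\mapsto(w,z)$ and composing, one can realize any shear $(z,w)\mapsto(z, w + h(z))$ and any linear map as products of the $\psi_j$'s; combining these with $H_{k+1}$ (already of the right form) yields $G$.

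\textbf{Main obstacle.} The genuine difficulty is bookkeeping the \emph{shape} constraint — not merely writing $G$ as \emph{some} finite composition of H\'enon-type maps, but ensuring every factor has the precise form $(z,w) \mapsto (\varphi_j(z) + aw, az)$ with the fixed $a = \tfrac12$, and that the conjugating affine maps $\Phi_\ell, \Phi_{n_k}^{-1}$ (which are genuine affine maps, not of this restricted shape) get absorbed. The resolution is that an affine automorphism of $\C^2$ of the form $(z,w)\mapsto (\alpha z + \beta w + \gamma,\ \delta z + \epsilon w + \zeta)$ is itself a finite composition of the elementary maps $\psi_j$ — e.g. one checks that $(z,w)\mapsto(aw,az)$, $(z,w)\mapsto(z+c+aw,az)$, and a few more generate enough affine maps after composition, and then argues that the specific affine conjugators here differ from allowed generators by maps already on the list; the precise choice $q_{k+1}\geq \ell+2$ in \eqref{aragorn} and the form $\beta_{n_k} = a^{2q_k}$ in condition (e) are exactly what make the scaling factors powers of $a$ so that $\Phi_\ell$, $\Phi_{n_k}^{-1}$ compose cleanly into such generators. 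I would organize the proof as: (1) elementary maps and all affine maps generate a group $\mathcal{G}$; (2) within $\mathcal{G}$, every map whose last coordinate is literally $az$ is a composition of the $\psi_j$'s (by the shear reduction, using that composing two of the form $(z,w)\mapsto(w/a, \cdot)$ gives back the form $(z,w)\mapsto(az,\cdot)$ at the cost of lower-order terms absorbed into $\varphi_j$); (3) verify $G$ has last coordinate $az$ — this is immediate since the last coordinates of all the ingredient maps chain up to $a^{N}z + (\text{stuff})$, and one normalizes — and conclude. For the write-up I expect step (2), the induction peeling off elementary factors while maintaining the normal form, to require the most care, with the degenerate-looking but essential point being that $d_{k+1}$ being a large \emph{odd} integer guarantees $z\mapsto (az)^{d_{k+1}}$ is injective on no proper structure is lost — actually the oddness of $d_{k+1}$ is what later makes $H_{k+1}$ behave correctly in Proposition~\ref{lem:short}'s hypotheses, and for this lemma large $d_{k+1}$ merely ensures the relevant shears are realizable; I would flag that the oddness is inherited, not used here.
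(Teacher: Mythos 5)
Your general plan (write every factor in the composition in terms of elementary H\'enon maps plus auxiliary affine maps, then argue the auxiliary pieces are absorbed) points in roughly the right direction, but the proposal contains two concrete errors and a substantive gap that would prevent a correct write-up.

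First, the ``factorization lemma'' you propose — that any affine automorphism $(z,w)\mapsto(\alpha z+\beta w+\gamma,\,\delta z+\epsilon w+\zeta)$ is a finite composition of maps of the form $\psi_j(z,w)=(\varphi_j(z)+aw,az)$ — is false as stated. Each $\psi_j$ has constant Jacobian $-a^2$, so any composition of $N$ of them has Jacobian $(-a^2)^N$, a strongly restricted set of values. The class of admissible maps also does not contain inverses of its own generators: in particular $\tau(z,w)=(aw,az)$ is admissible (take $\varphi\equiv 0$) but $\tau^{-1}(z,w)=(w/a,z/a)$ is not, and this asymmetry is the whole crux of the lemma. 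The paper's proof (Lemma \ref{lem:henon}) does not appeal to any shear-reduction or Jung--van der Kulk machinery; it instead writes down explicit identities: $F_k^{-j}=\tau\circ\tilde\Lambda_{0,j}\circ\tau\circ\tau^{-2(j+1)}=\tau^{-2(j+1)}\circ\tau\circ\hat\Lambda_{j,0}\circ\tau$ with $\tilde\Lambda_j,\hat\Lambda_j$ of the admissible shape, $H_{k+1}=\mathcal{H}_{k+1}\circ\tau^{d_1\cdots d_{k+1}-1}$, and factorizations of $\Phi_\ell$ and $\Phi_{n_k}^{-1}$ as $\tau^{\pm 2(\cdot)}$ times admissible affine shears — all of which work precisely because the scaling factors $\beta_{n_k}$ and $s_\ell$ are literal powers of $a^2$, so the dilations are exactly even powers of $\tau$. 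The entire game is then to collect the resulting powers of $\tau$ (positive from $\Phi_\ell$ and $H_{k+1}$, negative from $F_k^{-\ell}$, $F_k^{n_k-N_k}$ and $\Phi_{n_k}^{-1}$) and check the net exponent is nonnegative.

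Second, you have the roles of ``large'' and ``odd'' for $d_{k+1}$ essentially reversed. You say oddness is not used in this lemma and largeness merely ensures shears are realizable. In fact, both are used here and are not minor. Oddness is needed because $H_{k+1}=\mathcal{H}_{k+1}\circ\tau^{d_1\cdots d_{k+1}-1}$ requires $d_1\cdots d_{k+1}-1$ to be a legitimate (integer) power, and the linear parts on the two sides force the product $d_1\cdots d_{k+1}$ to be odd, as the paper explicitly remarks. Largeness is what guarantees that the exponent $d_1\cdots d_{k+1}-1-2(N_k-n_k+q_k)$ appearing in \eqref{gandalf} is nonnegative; if it were negative you would be left with uneliminable $\tau^{-1}$ factors, which are not of the admissible form, and the lemma would fail. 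So the condition ``$d_{k+1}$ sufficiently large and odd'' is load-bearing precisely at the step you marked as routine. Your proposal never performs this exponent bookkeeping, so it does not actually establish that all surviving factors have the prescribed shape, which is the only nontrivial content of the statement.
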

\begin{proof}Let $F_k(z,w)=(f_k(z)+aw,az)$ and $\tau(z,w)=(aw,az)$. The map $\tau$ is clearly of the form $(\varphi(z)+aw,az)$ if we set $\varphi\equiv 0$.  A quick computation shows that
$$
F^{-1}_k(z,w) = \tau^{-2}\circ \tau\circ \Lambda\circ \tau \circ \tau^{-2} (z,w).
$$
where  $\Lambda(z,w)=(-f_k(z)+aw,az)$.
Clearly $\tau^{-2}$ commutes with $\tau$. The map $\tau^{-2}$ does not commute with $\Lambda$, but it satisfies the following relations:
for all $j\geq1$,
$$\tau^{-2j}\circ \Lambda=\tilde\Lambda_j\circ \tau^{-2j},\quad \Lambda\circ \tau^{-2j}=\tau^{-2j}\circ \hat\Lambda_j,$$
where we denote
$$
\tilde\Lambda_j(z,w):=(-\frac{f_k(a^{2j}z)}{a^{2j}}+aw,az),\quad \hat\Lambda_j(z,w):=(-a^{2j}f_k(\frac{z}{a^{2j}})+aw,az).$$
Notice that the maps $\tilde\Lambda_j$ and $\hat\Lambda_j$ are of the required form $(z,w)\rightarrow(\varphi(z)+aw,az)$.
If for all $j\geq 1$ we denote
$$\tilde\Lambda_{0,j}:=\tilde\Lambda_1\circ\ldots\circ \tilde\Lambda_j, \quad \hat\Lambda_{j,0}:= \hat\Lambda_{j}\circ\ldots\circ \hat\Lambda_1,$$
we obtain
\begin{equation}\label{bilbo}
F^{-j}_k=\tau\circ \tilde \Lambda_{0,j}\circ \tau \circ\tau^{-2(j+1)}=\tau^{-2(j+1)}\circ \tau  \circ \hat\Lambda_{j,0} \circ \tau.
\end{equation}

Define
$$\mathcal{H}_{k+1}(z,w):=(a^{(2-d_1\cdots d_{k+1})d_{k+1}}z^{d_{k+1}}+ aw,az),$$
and notice that
$$
H_{k+1}=\mathcal{H}_{k+1}\circ\tau^{d_1\cdots d_{k+1}-1}.
$$
We remark that from the last formula it follows that the product $d_1\cdots d_{k+1}$ must be odd, as otherwise the linear parts on left and right hand side cannot be equal.

Finally we focus on $\Phi_{\ell}$ and $\Phi_{n_k}^{-1}$.
By  inductive assumption (e) we know that    $\beta_{n_k}=a^{2q_k}$. By (\ref{aragorn}) we know that $s_{\ell}=a^{2q_{k+1}}$. Let us write $P_{n_k}=(z_{n_k}, w_{n_k})$ and $Q_\ell=(z_{\ell}', w_{\ell}')$  and recall that
$$\Phi_{\ell}(z,w)=(s_{\ell}\cdot z+z_{\ell}',s_{\ell}\cdot w+w_{\ell}').$$
If we define
$$S_1(z,w):=(\frac{w_{\ell}'}{a^{2(q_{k+1}-1)+1}}+aw,az),\quad S_2(z,w):=(\frac{z_{\ell}'}{a^{2(q_{k+1}-1)}}+aw,az),$$ then a quick computation gives us
$$
\Phi_{\ell}=\tau^{2(q_{k+1}-1)}\circ S_2\circ S_1.
$$
For the inverse
$$\Phi_{n_k}^{-1}(z,w)=(\frac{z-z_{n_k}}{\beta_{n_k}},\frac{w-w_{n_k}}{\beta_{n_k}}).$$
we denote $R_1(z,w):=(-a w_{n_k}+aw,az)$ and $R_2(z,w):=(-a^2 z_{n_k}+aw,az)$
$$
\Phi_{n_k}^{-1}=\tau^{-2(q_k-1)}\circ R_2\circ R_1.
$$

\medskip
Now we can write

$$F^{-\ell}_k\circ\Phi_{\ell}\circ H_{k+1}\circ \Phi_{n_k}^{-1}\circ F^{n_k-N_k}_k=
$$
$$
=\tau\circ \tilde\Lambda_{0,\ell}\circ \tau^{2(q_{k+1}-\ell-2)+1}\circ S_2\circ S_1\circ  \mathcal{H}_{k+1}\circ\tau^{d_1\cdots d_{k+1}-1-2(q_k-1)}\circ R_2\circ R_1\circ\tau^{-2(N_k-n_k+1)}\circ \tau  \circ \hat\Lambda_{N_k-n_k,0} \circ \tau
$$
\begin{equation}\label{gandalf}
=\tau\circ \tilde\Lambda_{0,\ell}\circ  \tau^{2(q_{k+1}-\ell-2)+1} \circ S_2\circ S_1 \circ  \mathcal{H}_{k+1}\circ\tau^{d_1\cdots d_{k+1}-1 -2(N_k-n_k+q_k)}\circ \tilde{R}_2\circ \tilde{R}_1\circ \tau  \circ \hat\Lambda_{N_k-n_k,0}  \circ \tau,
\end{equation}
where
\begin{align*}
\tilde{R}_1(z,w)&=(-a^{2(N_k-n_k+1)+1} w_{n_k}+aw,az),\\
\tilde{R}_2(z,w)&=(-a^{2(N_k-n_k+1)+2} z_{n_k}+aw,az).
\end{align*}

Since $q_{k+1}\geq \ell + 2$ it follows that $2(q_{k+1}-\ell-2)+1\geq 0$. If we choose the positive odd integer $d_{k+1}$ in such a way that
$$
d_{k+1}\geq \frac{2(N_k-n_k+q_k)+1}{d_1\cdots d_n},
$$ then $d_1\cdots d_{k+1}-1 -2(N_k-n_k+q_k)\geq 0$, and hence the lemma is proven.

\end{proof}

Let $\psi_{N}\circ\ldots\circ \psi_1$ be the maps given by  Lemma \ref{lem:henon}.
Let us denote $P_{N_k}:=(x_0,y_0)$, and, for all $1\leq n\leq N$,
 $$
X_n:=(x_n,y_n):=\psi_n\circ\ldots\circ \psi_1(x_0,y_0).
$$

Notice that  $X_N:= Q_0$, and that $y_n=ax_{n-1}$.

\begin{lemma}\label{lem:radius} Define $W:=F_k^{N_k-n_k}(\B(P_{n_k},\beta_{n_k}))$. If $d_{k+1}$ is sufficiently large  then for every $1\leq n\leq N$ we have
$$\psi_n\circ\ldots\circ \psi_1(W)\subset\subset  \B(X_n,\frac{1}{k+1}).$$
\end{lemma}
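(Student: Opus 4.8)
The statement asks that the ball $W = F_k^{N_k-n_k}(\B(P_{n_k},\beta_{n_k}))$ stays, under each partial composition $\psi_n\circ\dots\circ\psi_1$, inside the small ball $\B(X_n,\tfrac{1}{k+1})$. The plan is to trace $W$ through the explicit factorization \eqref{gandalf} of $\psi_N\circ\dots\circ\psi_1 = F^{-\ell}_k\circ\Phi_\ell\circ H_{k+1}\circ\Phi_{n_k}^{-1}\circ F^{n_k-N_k}_k$, estimating the image at each intermediate stage and exploiting that $d_{k+1}$ (hence also the total number of conjugating $\tau$'s) may be taken as large as we wish. First I would observe that $\Phi_{n_k}^{-1}\circ F^{n_k-N_k}_k(W) = \Phi_{n_k}^{-1}(\B(P_{n_k},\beta_{n_k})) = \B$, the unit ball, by definition of $\Phi_{n_k}$ and $W$. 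Then $H_{k+1}(\B)\subset\subset\B$ (indeed $H_{k+1}(\B)$ lies in a ball of radius $\to 0$ as $d_{k+1}\to\infty$, since $|(az)^{d_{k+1}}|\le a^{d_{k+1}}$ and $a^{d_{k+1}\cdots d_1}\to 0$). Applying $\Phi_\ell$ sends this into $\B(Q_\ell, s_\ell)$, and by the choice of $s_\ell$ made just before the lemma, $F_k^{-j}(\B(Q_\ell,s_\ell))\subset\subset\B(Q_{\ell-j},\tfrac{\theta}{2})$ for $0<j\le\ell$; in particular $F_k^{-\ell}$ of it lands in $\B(Q_0,\tfrac{\theta}{2}) = \B(X_N,\tfrac\theta2)\subset\subset\B(X_N,\tfrac1{k+1})$ since $\theta<\tfrac1{k+2}$. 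This handles $n=N$.

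For the intermediate $n$, the idea is that the points $X_n$ are \emph{defined} as the orbit $X_n = \psi_n\circ\dots\circ\psi_1(P_{N_k})$ of the single point $P_{N_k}$, so it suffices to show that each $\psi_n\circ\dots\circ\psi_1$ does not expand $W$ by more than a controlled amount — i.e., that the composite map has small Lipschitz constant on a neighborhood of $P_{N_k}$ once $d_{k+1}$ is large. Here is where largeness of $d_{k+1}$ enters in a second way: in the factorization \eqref{gandalf}, as $d_{k+1}$ grows, each of the finitely many ``building-block'' maps $\tau,\tilde\Lambda_j,\hat\Lambda_j, S_i, \tilde R_i, \mathcal H_{k+1}$ is fixed \emph{except} for the powers of $\tau$ (whose exponents grow) and for $\mathcal H_{k+1}$ itself. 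The map $\tau(z,w)=(aw,az)$ with $a=\tfrac12$ is a linear contraction by the factor $a<1$, so inserting a high power $\tau^{m}$ contracts distances by $a^{m}$; and $\mathcal H_{k+1}$ has the monomial part $a^{(2-d_1\cdots d_{k+1})d_{k+1}}z^{d_{k+1}}$ whose derivative on a bounded set is controlled by $d_{k+1}\,a^{(2-d_1\cdots d_{k+1})d_{k+1}}\,(\text{const})^{d_{k+1}-1}$, which tends to $0$ superexponentially. Thus the full composite $\psi_n\circ\dots\circ\psi_1$ — at least for $n$ past the stage where $\mathcal H_{k+1}$ and the big $\tau$-powers have been applied — has arbitrarily small derivative near $P_{N_k}$, so it maps the ball $W$ (whose radius is at most $\tfrac\theta2<\tfrac1{k+2}$, being contained in some $F_k^{N_k-n_k}$-image of a small ball) into an arbitrarily small ball about $X_n$.

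The remaining case is the \emph{early} stages, before $\mathcal H_{k+1}$ is applied: reading \eqref{gandalf} from the right, the first maps applied are $\tau,\hat\Lambda_{N_k-n_k,0},\tau,\tilde R_1,\tilde R_2$ and then a large power $\tau^{d_1\cdots d_{k+1}-1-2(N_k-n_k+q_k)}$. All maps here \emph{except} that last large $\tau$-power are fixed independent of $d_{k+1}$ — they depend only on $F_k, P_{n_k}, Q_\ell$ and the data from step $k$. Hence there is a fixed constant $C_k$ bounding the Lipschitz constant of the composite of these fixed maps on a fixed compact neighborhood of $P_{N_k}$ (note $W$ itself lies in $\B(P_{N_k},\tfrac1{k+1})$ by inductive hypothesis (d), or can be arranged to by shrinking $\beta_{n_k}$). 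Since $W$ has radius $\le\beta_{N_k}\le\tfrac1{k+1}$, its image under each of these first several maps stays in a ball of radius $\le C_k^{\,n}\cdot\tfrac1{k+1}$ about $X_n$; to make this $\le\tfrac1{k+1}$ we simply note these maps are affine-linear plus (for $\hat\Lambda$) a shrunk copy of $f_k$, and in fact we may additionally \emph{shrink} $\beta_{n_k}$ (which is still free, subject only to (e) and (f), and (f) only requires $\beta_{n_k}$ small) so that $C_k^{N}\cdot\beta_{N_k}<\tfrac1{k+1}$, where $\beta_{N_k}$ is itself bounded by $\beta_{n_k}$ times the Lipschitz constant of $F_k^{N_k-n_k}$. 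Once the large $\tau$-power is applied the radius is contracted by $a^{(\text{large})}$, absorbing everything, and thereafter the argument of the previous paragraph applies. \textbf{The main obstacle} I anticipate is bookkeeping: keeping the dependence of constants honest — ensuring that the constant $C_k$ controlling the ``fixed'' early maps is genuinely independent of $d_{k+1}$ and of the still-unchosen $\beta_{n_k}$, so that the quantifier order ``choose $d_{k+1}$ large, then (if needed) $\beta_{n_k}$ small'' is not circular — and verifying the derivative bound for $\mathcal H_{k+1}$ carefully enough that ``sufficiently large $d_{k+1}$'' really does kill the accumulated expansion from all $N$ stages simultaneously (recall $N$ itself depends on $d_{k+1}$, since the $\tau$-powers have length $\sim d_1\cdots d_{k+1}$, so one must check the contraction from the big $\tau$-block beats the stage-count $N$, which holds because $a^{m}\cdot m\to 0$).
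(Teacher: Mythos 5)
Your proposal diverges from the paper's proof in the two places where the argument actually has to be careful, and in both places there is a genuine gap.

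\textbf{The derivative estimate for $\mathcal{H}_{k+1}$ is false as stated.} You claim the monomial part of $\mathcal{H}_{k+1}$ has derivative tending to $0$ superexponentially ``on a bounded set.'' But with $a=\tfrac12$ the coefficient $a^{(2-d_1\cdots d_{k+1})d_{k+1}} = 2^{(d_1\cdots d_{k+1}-2)d_{k+1}}$ \emph{blows up} superexponentially as $d_{k+1}\to\infty$; the derivative on $\{|z|\le c\}$ is $\sim d_{k+1}\,2^{(d_1\cdots d_{k+1}-2)d_{k+1}}c^{\,d_{k+1}-1}$, which is small only for $c\lesssim a^{d_1\cdots d_{k+1}}$ — precisely because the preceding $\tau$-power has already shrunk the set to that size. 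So ``$\mathcal{H}_{k+1}$ has small Lipschitz constant'' has no self-contained meaning; what the paper uses instead is the algebraic identity that the composite of all the factors up to and including $\mathcal{H}_{k+1}$ equals $H_{k+1}\circ\Phi_{n_k}^{-1}\circ F_k^{\,n_k-N_k}$, so that applied to $W$ it gives exactly $H_{k+1}(\B)$, whose diameter tends to $0$ as $d_{k+1}\to\infty$. The smallness is global, not read off factor by factor.

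\textbf{The ``early stages'' argument is circular, as you yourself suspect, and the paper has a different, non-circular argument.} You propose to bound the Lipschitz constant $C_k$ of $\tau,\hat\Lambda_{N_k-n_k,0},\tau,\tilde R_1,\tilde R_2$ on a fixed neighborhood of $P_{N_k}$ and then shrink $\beta_{n_k}$ so that $C_k^{\,N}\cdot(\text{radius of }W)$ is small. Two problems: (i) $\beta_{n_k}$ was fixed at stage $k$ (via the choice of $s_\ell$ there), before $F_k$'s successor and the constants at stage $k+1$ exist, so re-shrinking it is not available in the inductive scheme; (ii) a uniform Lipschitz bound for the $\hat\Lambda_j$ on a fixed neighborhood is not free, since $\hat\Lambda_j(z,w)=(-a^{2j}f_k(z/a^{2j})+aw,az)$ and $f_k$ is transcendental, so controlling $f_k'$ requires knowing a priori where the partial orbit lies — which is what the lemma asserts. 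The paper avoids both problems entirely by using the inductive hypothesis (g) rather than a Lipschitz estimate: from \eqref{bilbo}, $\hat\Lambda_{n-1,0}\circ\tau=\tau^{2n-1}\circ F_k^{-(n-1)}$, so for $1\le n\le N_k-n_k+1$,
\[
\psi_n\circ\cdots\circ\psi_1(W)=\tau^{2n-1}\bigl(F_k^{-(n-1)}(W)\bigr)\subset\subset\tau^{2n-1}\Bigl(\B\bigl(P_{N_k-(n-1)},\tfrac1{k+1}\bigr)\Bigr)\subset\B\bigl(X_n,\tfrac1{k+1}\bigr),
\]
since condition (g) gives $F_k^{-(n-1)}(W)\subset\subset\B(P_{N_k-(n-1)},\tfrac1{k+1})$ and $\tau^{2n-1}$ is a contraction fixing the orbit point. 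No constants $C_k$, no re-choice of $\beta_{n_k}$, no bound on $f_k'$. The remaining finitely many factors $\tilde R_1,\tilde R_2,\tau$ are affine contractions, the long $\tau$-power is a contraction at every intermediate step, and after $\mathcal{H}_{k+1}$ the image is $H_{k+1}(\B)$ which for large $d_{k+1}$ is small enough to survive the $d_{k+1}$-independent tail $S_1,S_2,\tau^{\,2(q_{k+1}-\ell-2)+1},\tilde\Lambda_{0,\ell},\tau$. That is the missing ingredient in your argument.
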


\begin{proof}
By condition (g) of the inductive process we have that for all $0\leq j\leq N_k-n_k$,
\begin{equation}\label{frodo}
F_k^{-j}(W)\subset \subset \B(P_{N_k-j},\frac{1}{k+1}).
\end{equation}
Notice that  $F_k^{-N_k+n_k}(W)=\B(P_{n_k},\beta_{n_k})$.

By (\ref{bilbo}) we have  $F_k^{-1}=\tau^{-4}\circ\tau\circ\hat \Lambda_{1,0}\circ\tau$.
Since $\tau(z,w)=(aw,az)$, it follows that $\psi_1(W)=\tau (W)\subset \B(X_1,\frac{1}{k+1})$.
By (\ref{frodo}) we obtain
$$\tau^{-3}\circ\hat  \Lambda_{1}\circ\tau (W)\subset\subset\B(P_{N_k-1},\frac{1}{k+1}),$$
and thus
$$\psi_2\circ \psi_1(W)=\hat  \Lambda_{1}\circ\tau (W)\subset \subset\tau^3(\B(P_{N_k-1},\frac{1}{k+1}))\subset \B(X_2, \frac{1}{k+1}).$$

Since  $F_k^{-2}= \tau^{-5}\circ\hat{\Lambda}_2\circ\hat{\Lambda}_1\circ\tau$, by (\ref{frodo}) we obtain
$$\tau^{-5}\circ\hat{\Lambda}_2\circ\hat{\Lambda}_1\circ\tau(W)\subset \subset\B(P_{N_k-2},\frac{1}{k+1}),$$ and thus
$$\psi_3\circ \psi_2\circ \psi_1(W)=\hat{\Lambda}_{2}\circ\hat{\Lambda}_{1}\circ\tau( W) \subset \subset \B(X_3,\frac{1}{k+1}).$$

Repeating this argument we obtain, for all $1\leq n\leq N_{k}-n_k+1$,
$$\psi_n\circ \dots\circ \psi_1(W)=\hat\Lambda_{n-1,0}\circ \tau(W)\subset \subset \B(X_n,\frac{1}{k+1}).$$

The maps $\tilde{R}_1$, $\tilde{R}_2$, $\tau$ are affine contractions, hence  after applying these maps the image of $W$ will still be contained in the respective balls of radius $\frac{1}{k+1}$. Let us look what happens if we now apply the map $\mathcal{H}_{k+1}$. Notice that
$$
\mathcal{H}_{k+1}\circ\tau^{d_1\cdots d_{k+1}-1 -2(N_k-n_k+q_k)}\circ \tilde{R}_2\circ \tilde{R}_1\circ \tau  \circ \hat{\Lambda}_{N_k-n_k,0} \circ \tau= H_{k+1}\circ \Phi_{n_k}^{-1}\circ F^{n_k-N_k}_k,
$$
hence
$$
\mathcal{H}_{k+1}\circ\tau^{d_1\cdots d_{k+1}-1 -2(N_k-n_k+q_k)}\circ \tilde{R}_2\circ \tilde{R}_1\circ \tau  \circ \hat{F}_{k,(N_k-n_k)} \circ \tau (W)= H_{k+1}(\B).
$$

By choosing $d_{k+1}$ sufficiently large we can make sure that
$H_{k+1}(\B)$ and all remaining images of $W$ are contained in the balls of radius  $\frac{1}{k+1}$.

\end{proof}

Let us denote $P_{N_k}:=(z_0'',w_0'')$,  $Q_0:=(z_N'',w_N'')$, let $z_1'',\ldots, z_{N-2}''\in \mathbb{C}$ be some sequence of points
 to be determined later, and let $z_{N-1}'':=\frac{w_{0}'}{a}=\frac{w_{N}''}{a}$. Next we define
 for all $0\leq n \leq N$, $w_{n}'':=az_{n-1}''$, $T_n:=(z_{n}'',w_{n}'')$ and  $\Theta_n(z,w):=(z-z_n''+x_n, w-w_n''+y_n)$. For all $1\leq n \leq N$,
\begin{align*}
G_n(z,w)&:=\Theta_{n}^{-1}\circ\psi_n\circ\Theta_{n-1}=\\
 &=(z_{n}''+\varphi_n(z-z_{n-1}''+x_{n-1})-\varphi_{n}(x_{n-1})-aw_{n-1}''+aw ,az),
\end{align*}
which is of the form $G_n(z,w)=(g_n(z)+aw,az),$
where
\begin{equation}\label{definizioneg}
g_n(z):=z_{n}''+\varphi_n(z-z_{n-1}''+x_{n-1})-\varphi_{n}(x_{n-1})-aw_{n-1}''.
\end{equation}
Notice that   for every $1\leq n \leq N$ we have $G_n(T_{n-1})=T_n,$
since $\Theta_j(T_j)=X_j$ for all $0\leq j\leq N$.
Notice also  that $\Theta_N=\Theta_0={\sf id}$, and thus
$$
G_N\circ\ldots\circ G_1=F^{-\ell}_k\circ\Phi_{\ell}\circ H_{k+1}\circ \Phi_{n_k}^{-1}\circ F^{n_k-N_k}_k.
$$

Since the maps $\Theta_{n}$ are just translations, and $\Theta_0$ is the identity, it follows from Lemma \ref{lem:radius} that
$$G_n\circ\ldots\circ G_1(W)=\Theta_n^{-1}\circ \psi_n\circ \dots \circ \psi_1\circ \Theta_0\subset\subset \B(T_n,\frac{1}{k+1}).$$
\begin{remark}
Observe that $G_N=\Theta_{N}^{-1}\circ\tau\circ\Theta_{N-1}=(a(y_{N-1}-w_{N-1}'') +aw,az)$. We know that $G_N\circ\ldots\circ G_1(W)\subset\subset \B(Q_0,\frac{\theta}{2})$, hence
$$G_{N-1}\circ\ldots\circ G_1(W)\subset\subset \B(T_{N-1},\theta).$$
\end{remark}
Fix $d_{k+1}$ such that  $\frac{\log{s_{\ell}}}{d_1\ldots d_{k+1}}\leq \frac{1}{k+1}$ and such that Lemma \ref{lem:henon} and Lemma \ref{lem:radius} hold. We  now have  complete freedom of choosing points $z_1'',\ldots, z_{N-2}''\in \mathbb{C}$ such that the disks
$$
 \overline{D}(z_{N_k},\beta_{N_k}),\qquad \overline{D}(w_0'/a, \theta), \qquad \overline{D}(z_M', \theta),\qquad  (\overline{D}(z_j'',\frac{1}{k+1}))_{1\leq j\leq N-2 }
$$
are pairwise disjoint, and  disjoint from the polynomially convex set $K$ defined by (\ref{defk}). Let H denote the union of these disks, and define $R_{k+1}>0$ sufficiently large so that $K\cup H\subset D(0,R_{k+1})$.

We define a holomorphic function $h$ on the polynomially convex set $K\cup H$ in the following way:
\begin{enumerate}
\item $h$ coincides with $f_k$ on $K$
\item  $h|_{\overline{D}(z_{N_k},\beta_{N_k})}$ coincides with $g_1$, where the $g_j$ are defined in (\ref{definizioneg}),
\item   $h|_{\overline{D}(z_j'',\frac{1}{k+1})}$ coincides with $g_{j+1}$ for all $1\leq j \leq N-2$,
\item $h|_{\overline{D}(w_0'/a, \theta)}$ coincides with  $g_N$,
\item $h|_{\overline{D}(z_M',\theta)}$ is constantly equal to some  value $A\in \C$
such that $ |A+aw'_M|>R_{k+1} +3$.
\end{enumerate}

By the Runge Approximation Theorem there exists a function $f_{k+1}\in\mathcal{O}(\C)$ satisfying:

\begin{enumerate}
\item $f_{k+1}(0)=h(0)=0$ and $f_{k+1}'(0)=h'(0)=1$,
\item $f_{k+1}$ coincides with $h$ on all the points $z_j,z_j', z_j''$,
%\item $f_{k+1}(z_j)=h(z_j)$ for all $0\leq j< N_k$,
%\item $f_{k+1}(z_j')=h(z_j')$ for all $0\leq j\leq M$,
%\item $f_{k+1}(z_j'')=h(z_j'')$ for all $0\leq j\leq N$,
\item $\|f_{k+1}-h\|_{K\cup H}<\delta_{k+1}\leq 2^{-k-1}$, with $\delta_{k+1}$ to be chosen later.
\end{enumerate}
We define $F_{k+1}:=(f_{k+1}(z)+aw,az)$, so that the sequences of points
$$
(P_j)_{0\leq j\leq N_k},\qquad (T_j)_{1\leq j\leq N-1},\qquad (Q_j)_{0\leq j\leq M}
$$
together form the start of an $F_{k+1}$-orbit.

Set $n_{k+1}:=N_k+N+\ell$ and $N_{k+1}:=N_k+N+M+1$, $P_{N_{k+1}}:=F_{k+1}(Q_M)$. Define  $\beta_j:=\frac{1}{k+1}$ if $N_k\leq j< n_{k+1}$, $\beta_{n_{k+1}}:=s_\ell$ and $\beta_j:=\frac{1}{k+2}$ if $n_{k+1}<j\leq N_{k+1}$.

It is immediate that  conditions (a) --- (f) are satisfied for the $(k+1)$-th step. We claim that $\delta_{k+1}$ can be chosen sufficiently small enough such that conditions (g) and (h) are satisfied for the $(k+1)$-th step.
We start with (g), that is we show that
for all $0\leq s\leq k+1$,
\begin{equation}\label{galadriel}
F_{k+1}^j(\B(P_{n_s},\beta_{n_s}))\subset \subset\B(P_{n_s+j},\beta_{n_s+j}), \quad \forall\,1\leq j\leq N_{k+1}-n_s,
\end{equation}
We have
\begin{align*}
 &F_k^{j}(\B(P_{n_k},\beta_{n_k})\subset \subset \B(P_{n_k+j},\beta_{n_k+j}),\quad \forall 1\leq j\leq N_{k}-n_k,\\
 &G_{j}\circ \dots \circ G_1\circ F_k^{N_k-n_k}(\B(P_{n_k},\beta_{n_k}))\subset \subset \B(T_j,\frac{1}{k+1}),\quad \forall 1\leq j\leq N-2,\\
 &G_{N-1}\circ \dots \circ G_1 \circ F_k^{N_k-n_k}(\B(P_{n_k},\beta_{n_k}))\subset \subset \B(T_{N-1},\theta),\\
 &F_k^j\circ G_{N}\circ \dots \circ G_1\circ  F_k^{N_k-n_k}(\B(P_{n_k},\beta_{n_k}))\subset \subset \B(Q_j,\theta), \quad \forall 0\leq j\leq M.\\
 \end{align*}
 Notice that all these sets are contained in $(K\cup H)\times \C$, and that
 $$(A+aw,az)\circ F_k^M\circ G_{N}\circ \dots \circ G_1\circ  F_k^{N_k-n_k}(\B(P_{n_k},\beta_{n_k}))\subset \subset \B(P_{N_{k+1}},\frac{1}{k+2}).$$
Hence we can choose $\delta_{k+1}>0$ small enough such that (\ref{galadriel}) holds for $s=k$.
Similarly one obtains (\ref{galadriel}) for  $0\leq s\leq k+1$, and hence (h), completing the proof of Proposition \ref{prop:henon}.

\section{Appendix: Calibrated basins and the plurisubharmonic method}

As a further illustration of using the plurisubharmonic method to prove that an attracting basin equals the Fatou component containing the basin, we take a closer look at the calibrated basins constructed in \cite{PW}. Let $f_0, f_1,f_2 \ldots$ be a sequence of automorphisms of $\mathbb C^k$, all having an attracting fixed point at the origin.
For all $j\geq 0$ there exists a radius $r_j>0$, a constant $0<\mu_j<1$ and a constant $C_j>0$ such that
$$\|f_j^n(z)\|\leq C_j\mu_j^n\|z\|,\quad \forall z\in B(0,r_j), n\geq 0. $$

We can choose $n_j$ large enough to obtain
$$
f_j^{n_j} (B(0, r_j)) \subset  B(0,r_{j+1}),
$$
and  $r_j\to 0$.
We then define the \emph{calibrated basin} $\Omega_{(r_j), (n_j)}$ by
$$
\Omega_{(r_j),(n_j)} := \bigcup_{j \in \mathbb N} (f_{j-1}^{n_{j-1}}\circ\dots \circ f_0^{n_0})^{-1} (B(0, r_j)).
$$
 It is easy to see that the calibrated basin may depend on the sequences $(r_j)$ and $(n_j)$ chosen.

Recall the following result from \cite{PW}:
\begin{theorem}  Fix the sequence $(r_j)$.
For $n_0, n_1, \ldots$ sufficiently large, where each $n_j$ may depend on the choices of $n_0, \ldots , n_{j-1}$, the calibrated basin of the sequence $f_0^{n_0}, f_1^{n_1}, \ldots$ is biholomorphic to $\mathbb C^k$.
\end{theorem}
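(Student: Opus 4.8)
The plan is to imitate the structure of the proof of Theorem \ref{thm:main2}, using the two preparatory results, Lemma \ref{lem:compare} and the plurisubharmonic method, with the difference that here the comparison model is the standard contracting model rather than Forn\ae ss's superexponentially contracting one. The starting observation is that for each fixed $j$, conjugating $f_j^{n_j}$ by the scaling $z\mapsto r_{j+1}z$ on the source and $z\mapsto r_j z$ on the target (more precisely, renormalizing so that all the balls $B(0,r_j)$ become the unit ball $\B$) turns $f_j^{n_j}$ into an automorphism $H_j$ of $\C^k$ with $H_j(\B)\subset\subset\B$. Since the $n_j$ are allowed to be chosen as large as we like after $n_0,\dots,n_{j-1}$ are fixed, and $f_j$ contracts at a definite geometric rate $\mu_j$ near $0$, by taking $n_j$ large we can force $\|H_j-L_j\|_{\overline\B}$ to be as small as we wish, where $L_j$ is the purely linear map obtained as the renormalized limit; in fact we can simply force $H_j$ to be uniformly close to a \emph{fixed} contraction, e.g. the scalar map $z\mapsto \tfrac12 z$. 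More economically: by choosing $n_j$ enormous we can guarantee that the renormalized calibrated basin is the basin $\Omega_H$ of a sequence $(H_j)$ with $H_j(\B)\subset\subset\B$ that is, term by term, within $\varepsilon(H_1,\dots,H_j)$ of the constant sequence $G_j\equiv(z\mapsto\tfrac12 z)$, in the sense of Lemma \ref{lem:compare}.

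The key steps, in order, are the following. First, set up the renormalization: let $\Psi_j(z):=r_j z$, and define $H_j:=\Psi_{j+1}^{-1}\circ f_{j-1}^{n_{j-1}}\circ\cdots$ — more carefully, define the renormalized one-step maps $H_j:=\Psi_j^{-1}\circ f_{j-1}^{n_{j-1}}\circ\Psi_{j-1}$ so that $H_j(\B)\subset\subset\B$ follows directly from $f_{j-1}^{n_{j-1}}(B(0,r_{j-1}))\subset\subset B(0,r_j)$ (which holds for $n_{j-1}$ large, since the inclusion into $B(0,r_j)$ is strict and can be taken compactly so). Second, estimate $H_j$: using $\|f_{j-1}^{n}(z)\|\le C_{j-1}\mu_{j-1}^n\|z\|$ on $B(0,r_{j-1})$, show that as $n_{j-1}\to\infty$ the maps $H_j$ converge (on $\overline\B$) to the $0$-map, hence for $n_{j-1}$ large, $\|H_j - (\text{arbitrary prescribed contraction})\|_{\overline\B}$ is as small as desired — here I would just aim for $\|H_j\|_{\overline\B}$ tiny, which certainly makes $H_j(\B)\subset\subset\B$ and makes $\Omega_H$ a translate/rescaled copy of the basin of a trivial sequence. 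Third, observe that the basin of \emph{any} sequence $(H_j)$ of automorphisms with $H_j(\B)\subset\subset\B$ that are sufficiently close (in the $\varepsilon(H_1,\dots,H_j)$ sense of Lemma \ref{lem:compare}) to the constant sequence $(z\mapsto\tfrac12 z)$ is biholomorphic to the basin of that constant sequence, which is visibly $\C^k$ (it is $\bigcup_j 2^j\B$). Thus choose the $n_j$ recursively so that each $H_{j+1}$ satisfies the Lemma \ref{lem:compare} bound relative to $H_1,\dots,H_j$; this is exactly the "each $n_j$ may depend on $n_0,\dots,n_{j-1}$" clause. Fourth, unwind the renormalization: $\Omega_{(r_j),(n_j)}=\Psi_0(\Omega_H)$, so it too is biholomorphic to $\C^k$, and additionally the requirement $r_j\to 0$ is automatic since we may shrink $r_j$ freely and it does not interfere with enlarging $n_j$.

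Finally, I would include the plurisubharmonic argument to confirm (as the appendix promises to "clarify" the result of \cite{PW}) that the calibrated basin is genuinely the whole Fatou component — though strictly the statement as displayed only asserts the biholomorphism type of the calibrated basin, so this is a bonus rather than a necessity. It proceeds exactly as in the proof of Theorem \ref{thm:main2}: on the Fatou component $\mathcal F_0\supset\Omega_{(r_j),(n_j)}$ all limit maps of the iterate sequence $F^{\,n_0+\cdots+n_{j-1}}$ are constant, so after renormalization one builds $\varphi_j:=\tfrac{1}{(\text{scale})}\log\|\cdots\|$ — here the scales are just $\prod\mu_i^{n_i}$-type quantities — checks local uniform upper-boundedness and $\limsup\varphi_j\le 0$, shows $(\varphi_j)$ decreases (up to summable error) on the calibrated basin and hence converges to a plurisubharmonic $\varphi<0$ at a point, with $\varphi\equiv 0$ off the calibrated basin, contradicting the maximum principle unless $\Omega=\mathcal F_0$.

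\textbf{Main obstacle.} The delicate point is the \emph{order of quantifiers} in the recursive choice of $(n_j)$: the constant $\varepsilon(H_1,\dots,H_j)$ from Lemma \ref{lem:compare} depends on $H_1,\dots,H_j$, which in turn depend on $n_0,\dots,n_{j-1}$, so one must verify there is no circularity — $n_{j-1}$ is chosen after $n_0,\dots,n_{j-2}$ (hence after $H_1,\dots,H_{j-1}$ and their $\varepsilon$-constants are fixed), large enough that $\|H_j - (z\mapsto\tfrac12 z)\|_{\overline\B}<\varepsilon(H_1,\dots,H_j)$; but $\varepsilon(H_1,\dots,H_j)$ itself depends on $H_j$, which depends on $n_{j-1}$! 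The resolution is that $\varepsilon(H_1,\dots,H_j)$ need only be a lower bound that works for \emph{all} admissible $H_j$ in a neighborhood — equivalently, one applies Lemma \ref{lem:compare} with the target sequence being the constant sequence $G\equiv(z\mapsto\tfrac12 z)$ and the $H$-sequence being ours, reading off $\varepsilon(G_1,\dots,G_j)$, which depends only on the fixed constant maps and not on our $H$-sequence at all; then choosing $n_{j-1}$ so that $\|H_j-G_j\|_{\overline\B}<\varepsilon(G_1,\dots,G_j)$ is a clean, non-circular requirement. Getting this bookkeeping exactly right, and keeping track of which renormalization conventions ($\Psi_j=r_j\,\mathrm{id}$ versus a more refined affine change matching the linear part of $f_j^{n_j}$) make the estimates cleanest, is the part that needs care; everything else is a routine transcription of the arguments already given.
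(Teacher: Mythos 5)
The paper does not actually prove this statement: it is quoted verbatim from \cite{PW}, and the appendix only proves the subsequent assertion $\mathcal F_0 = \Omega_{(r_j),(n_j)}$ via the plurisubharmonic method. So there is no in-paper proof to compare against; I assess your proposal on its own.

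Your frame (renormalize to unit-ball automorphisms $H_j$ and invoke Lemma~\ref{lem:compare}) is plausible, but the central step is wrong. With $\Psi_j(z) = r_j z$ and $H_j := \Psi_j^{-1}\circ f_{j-1}^{n_{j-1}}\circ\Psi_{j-1}$, the bound $\|f_{j-1}^{n}(z)\|\le C_{j-1}\mu_{j-1}^{n}\|z\|$ on $B(0,r_{j-1})$ together with the fact that the sequence $(r_j)$ is \emph{fixed} by hypothesis yields
$\|H_j\|_{\overline\B}\le \tfrac{r_{j-1}}{r_j}\,C_{j-1}\,\mu_{j-1}^{n_{j-1}}\to 0$ as $n_{j-1}\to\infty$.
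So $H_j$ converges to the \emph{zero} map, and $\|H_j - (z\mapsto\tfrac12 z)\|_{\overline\B}\to\tfrac12$, not $0$. The claim that ``for $n_{j-1}$ large, $\|H_j - (\text{arbitrary prescribed contraction})\|_{\overline\B}$ is as small as desired'' is false; Lemma~\ref{lem:compare} cannot be applied with the constant reference sequence $z\mapsto z/2$. Your proposed resolution of the quantifier circularity rests entirely on this false estimate, so it does not go through. (A side remark: ``we may shrink $r_j$ freely'' is also not available — the theorem fixes $(r_j)$ first.)

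The variant you gesture at — comparing to the linear parts $L_j := d_0 H_j$, whose basin is indeed always $\C^k$ — is the more promising route, but it reinstates the quantifier problem in earnest and adds a quantitative one. The constant $\varepsilon(L_1,\dots,L_j)$ from Lemma~\ref{lem:compare} still depends on $L_j$, hence on $n_{j-1}$, so the ``read $\varepsilon$ off a fixed reference'' trick is unavailable. Moreover $\varepsilon$ carries a factor $1/(M_jN_j)$ where $N_j$ is the Lipschitz constant of $L_j^{-1}$ and $M_j$ that of $L_{j-1,0}^{-1}$; both blow up as $n_{j-1}\to\infty$, so $\varepsilon(L_1,\dots,L_j)$ and the nonlinear remainder $\|H_j - L_j\|_{\overline\B}$ both tend to zero, and the argument hinges on showing the latter wins that race. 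This is exactly where the content is: the remark immediately after the theorem notes that for the Forn{\ae}ss maps with $n_j = 1$ the calibrated basin is a \emph{Short} $\C^k$, so the biholomorphism type genuinely depends on how large the $n_j$ are relative to the contraction data, and cannot be settled by a soft perturbation argument. Your sketch does not supply these estimates, so there is a genuine gap.
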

Recall that it may be necessary to replace the maps $f_j$ by large iterates: all assumptions (with $r_j = \frac{1}{2}$) are satisfied by the maps in \cite{F}, see Proposition \ref{lem:short}, but in this case the calibrated basin is a Short $\mathbb C^k$, and hence not equivalent to $\mathbb C^k$.

One may wonder whether it is necessary to work with the calibrated basin instead of the basin that contains all points whose orbits converge to $0$. It turns out that this may indeed be necessary: for suitable choices of the sequence $f_0, f_1, \ldots$ the full basin may not be open, even when replacing the maps with arbitrarily high iterates $f_0^{n_0}, f_1^{n_1}, \ldots$, see \cite{PW}. This raises another natural question: is the calibrated basin equal to the Fatou component $\mathcal{F}_0$ containing the origin, that is, the largest connected open set with locally uniform convergence to $0$? Here we prove that this is indeed the case.

\begin{theorem}
Fix the sequence $(r_j)$. For $n_0, n_1, \ldots$ sufficiently large we have
$$
\mathcal{F}_0 = \Omega_{(r_j),(n_j)}.
$$
\end{theorem}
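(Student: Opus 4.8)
The plan is to mirror the argument used in the proof of Theorem~\ref{thm:main2}, adapting it to the non-autonomous but not superexponential setting of calibrated basins. Write $g_j := f_j^{n_j}$ and $g_{j,0} := g_{j-1}\circ\dots\circ g_0$, so that $\Omega_{(r_j),(n_j)} = \bigcup_j g_{j,0}^{-1}(B(0,r_j))$. Since each $g_j$ fixes $0$ and maps $B(0,r_j)$ into $B(0,r_{j+1})$ with $r_j\to 0$, the calibrated basin is an increasing union of the sets $\Omega_j := g_{j,0}^{-1}(B(0,r_j))$, it is open and connected, and locally uniformly $g_{j,0}\to 0$ on it; hence $\Omega_{(r_j),(n_j)}\subset\mathcal F_0$ automatically. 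The content is the reverse inclusion, which I would prove by contradiction: assuming $\Omega_{(r_j),(n_j)}\subsetneq\mathcal F_0$, I will produce a nonconstant plurisubharmonic function on $\mathcal F_0$ attaining a strict interior maximum, contradicting the maximum principle.

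The construction of that psh function follows the pattern of Proposition~\ref{lem:short} and of the final part of the proof of Theorem~\ref{thm:main2}. First, choosing the $n_j$ large enough, one arranges a contraction estimate of the form $\|g_{j+1}(z)\| \le c_{j+1}\|g_j\text{-image}\|^{1+\sigma_j}$ is \emph{not} available here (the Jacobians need not shrink superexponentially); instead the natural normalization is $\psi_j(z) := \lambda_j\log\max\{\|g_{j,0}(z)\|, r_j\}$ for a suitably decaying sequence of weights $\lambda_j>0$, chosen so that $\psi_j(0)=\lambda_j\log r_j$ stays bounded away from $0$ while the telescoping error terms coming from $\|g_{j+1}(w)\|\le C\mu_{j+1}^{n_{j+1}}\|g_{j,0}(w)\|$ are summable. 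Concretely, on each $\Omega_j$ one gets an inequality $\psi_{j+1}\le \psi_j + \varepsilon_j$ with $\sum\varepsilon_j<\infty$ after the $n_j$ are taken large, so that $\psi_j + \sum_{i>j}\varepsilon_i$ is eventually monotone decreasing on $\Omega_j$ and its limit $\varphi$ exists and is psh on $\Omega_{(r_j),(n_j)}$, with $\varphi(0)<0$. On $\mathcal F_0\setminus\Omega_{(r_j),(n_j)}$ one has $\|g_{j,0}(w)\|\ge r_j$-type lower bounds (the orbit never enters the relevant ball), giving $\psi_j(w)\ge 0$, while local uniform convergence $g_{j,0}\to 0$ on $\mathcal F_0$ forces $\limsup_j\psi_j\le 0$ there; hence $\varphi\equiv 0$ on the complement. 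Then $\varphi$ is upper semicontinuous on all of $\mathcal F_0$, coincides with its usc regularization (which is psh by \cite[Prop.~2.9.17]{Klimek} since the $\psi_j$ are locally uniformly bounded above), is $\le 0$, equals $0$ off $\Omega_{(r_j),(n_j)}$, and is $<0$ at the origin — contradicting the maximum principle on the connected open set $\mathcal F_0$.

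The main obstacle, compared with the transcendental H\'enon case, is that there is no ambient polynomial/automorphism structure forcing the weights to work out automatically: one must choose the iteration counts $n_j$ (after the $r_j$ are fixed) large enough simultaneously to guarantee (i) $g_j(B(0,r_j))\subset B(0,r_{j+1})$, (ii) the Lemma~\ref{lem:short}-type estimate $\psi_{j+1}\le\psi_j+\varepsilon_j$ with a \emph{prescribed} summable $\varepsilon_j$, and (iii) whatever is needed to invoke the previous theorem of \cite{PW} so that the same $(n_j)$ also yields a genuine calibrated basin. The interlocking of these choices — each $n_j$ depending on $n_0,\dots,n_{j-1}$ — is the delicate bookkeeping, but it is of the same flavor as the recursive choices of $\varepsilon(H_1,\dots,H_n)$ in Lemma~\ref{lem:compare}; once the estimates are set up, the psh argument and the final maximum-principle contradiction are routine. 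One should also check the harmless point that $\mathcal F_0$, defined as the largest connected open set of local uniform convergence to $0$, indeed contains $\Omega_{(r_j),(n_j)}$ and contains the origin, so that the maximum principle applies on a connected domain.
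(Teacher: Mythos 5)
Your overall strategy --- build a nonpositive plurisubharmonic function on $\mathcal F_0$ that is strictly negative on the calibrated basin and identically zero on its complement, then invoke the maximum principle --- is exactly what the paper does. But the specific normalization you propose contains an internal contradiction that you have not resolved, and I do not think it can be resolved in the form you wrote it.

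You set $\psi_j(z) := \lambda_j\log\max\{\|g_{j,0}(z)\|,\,r_j\}$, and you want two incompatible things from the weights $\lambda_j>0$. On the one hand, to get $\varphi(0)<0$ you ask that $\psi_j(0)=\lambda_j\log r_j$ be bounded away from $0$. On the other hand, on $\mathcal F_0\setminus\Omega_{(r_j),(n_j)}$ you claim $\psi_j(w)\geq 0$: but $\|g_{j,0}(w)\|\geq r_j$ only gives $\psi_j(w)\geq \lambda_j\log r_j$, and this is $\geq 0$ precisely when $\lambda_j\log r_j\to 0$. So the same quantity must simultaneously stay bounded away from $0$ and tend to $0$. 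The source of the trouble is that you borrowed the max-with-$\eta_k$ trick from Proposition~\ref{lem:short}, where $\eta_k=2^{-d_k\cdots d_1}$ shrinks superexponentially so that $\log\eta_k/(d_k\cdots d_1)$ is a nonzero constant; here the radii $r_j$ are \emph{fixed in advance}, so there is no hope of matching their decay rate to that of $\lambda_j$. The monotone telescoping $\psi_{j+1}\leq\psi_j+\varepsilon_j$ is also left entirely as a black box and is not an innocuous bookkeeping exercise --- in particular it is not needed.

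The paper sidesteps all of this by dropping the max altogether and exploiting the contraction rate directly. It takes
$$G_j(z):=\frac{\log\bigl\|f_j^{n_j}\circ\cdots\circ f_0^{n_0}(z)\bigr\|}{-n_j\log\mu_j},\qquad G:=\limsup_j G_j,$$
and works with the upper semicontinuous regularization $G^\star$, which is psh by local uniform boundedness above (Klimek, Prop.~2.9.17); no monotonicity is required. Note the crucial extra factor $f_j^{n_j}$ in the numerator: writing $z_j$ for the intermediate orbit point, the estimate $\|f_j^{n_j}(z_j)\|\leq C_j\mu_j^{n_j}\|z_j\|$ contributes the summand $n_j\log\mu_j$ to the numerator, which after dividing by $-n_j\log\mu_j$ gives an honest $-1$; the terms $\log C_j/(-n_j\log\mu_j)$ and $\log r_j/(-n_j\log\mu_j)$ are then pushed to $0$ by taking $n_j$ large. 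So $G\leq -1$ on the calibrated basin, while on the complement $\lambda_j\log r_{j+1}\leq G_j\leq 0$ eventually forces $G^\star=G=0$. Replacing your $\max$ and your unspecified $\lambda_j$ by this choice removes the gap.
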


\begin{proof}
Define
$$
G_j(z) :=  \frac{\log \| f_j^{n_j} \circ \cdots \circ f_0^{n_0}(z)\|}{-n_j \log \mu_j}.
$$
Observe that, given a compact $K\subset \mathcal{F}_0$, for big $j$ we have that  $f_j^{n_j} \circ \cdots \circ f_0^{n_0}(K)\subset \B^k$, and thus that $G_j|_K\leq 0$.
Let
$$
G = \limsup_{j\to \infty} G_j,
$$
and write $G^\star$ for the upper semi-continuous regularization of $G$. It follows that $G^\star$ is plurisubharmonic on  $ \mathcal{F}_0$.

If we choose $n_j$ sufficiently large we may assume  that $$\frac{\log r_j}{ n_j\log\mu_j}\longrightarrow 0,$$ hence for any point $z \in \mathcal{F}_0 \setminus \Omega_{(r_j),(n_j)}$ we have $G^\star(z) = G(z) = 0$.

On the other hand, let $z\in \Omega_{(r_j),(n_j)},$ and let $j\geq 0$ be large enough such that
$z_j:=f_{j-1}^{n_{j-1}} \circ \cdots \circ f_0^{n_0}(z)\in B(0,r_j)$.
Then $$G_j(z)=\frac{\log \|f_j^{n_j}(z_j)\|}{-n_j\log\mu_j}\leq \frac{\log C_j+\log \|z_j\|+n_j\log\mu_j}{-n_j\log\mu_j}.$$
Choosing the $n_j$'s large enough we thus obtain $G(z)\leq -1$ for all $z \in \Omega_{(r_j),(n_j)}$, which implies that  $G^\star(z)\leq -1$ for all $z \in \Omega_{(r_j),(n_j)}$. Since $\mathcal{F}_0$ is open and connected, it follows from the maximum principle that $\mathcal{F}_0 \setminus \Omega_{(r_j),(n_j)}$ must be empty, which completes the proof.
\end{proof}

\end{document}